\theoremstyle{plain}
\newtheorem*{mainresult}{Main Result}
\newtheorem{theorem}{Theorem}[section]
\newtheorem{lemma}[theorem]{Lemma}
\newtheorem{corollary}[theorem]{Corollary}
\newtheorem{assumption}[theorem]{Assumption}
\theoremstyle{definition}
\newtheorem{remark}[theorem]{Remark}
\numberwithin{equation}{section}
\newcommand{\linspan}{\mathop{\rm span}\nolimits}
\newcommand{\rest}{\left.\kern-2\nulldelimiterspace\right|_}
\newcommand{\norm}[2]{\left|#1\right|_{#2}}
\newcommand{\Id}{{\mathbf1}}
\newcommand{\indf}{1}
\newcommand{\p}{\partial}
\newcommand{\clA}{{\mathcal A}}
\newcommand{\clC}{{\mathcal C}}
\newcommand{\clF}{{\mathcal F}}
\newcommand{\clI}{{\mathcal I}}
\newcommand{\clK}{{\mathcal K}}
\newcommand{\clL}{{\mathcal L}}
\newcommand{\clU}{{\mathcal U}}
\newcommand{\clV}{{\mathcal V}}
\newcommand{\clW}{{\mathcal W}}
\newcommand{\bbN}{{\mathbb N}}
\newcommand{\bbR}{{\mathbb R}}
\newcommand{\bfA}{{\mathbf A}}
\newcommand{\bfC}{{\mathbf C}}
\newcommand{\bfD}{{\mathbf D}}
\newcommand{\bfE}{{\mathbf E}}
\newcommand{\bfF}{{\mathbf F}}
\newcommand{\bfG}{{\mathbf G}}
\newcommand{\bfK}{{\mathbf K}}
\newcommand{\bfL}{{\mathbf L}}
\newcommand{\bfM}{{\mathbf M}}
\newcommand{\bfR}{{\mathbf R}}
\newcommand{\bfS}{{\mathbf S}}
\newcommand{\bfU}{{\mathbf U}}
\newcommand{\bfV}{{\mathbf V}}
\newcommand{\bfW}{{\mathbf W}}
\newcommand{\bfX}{{\mathbf X}}
\newcommand{\fkA}{{\mathfrak A}}
\newcommand{\fkB}{{\mathfrak B}}
\newcommand{\fkK}{{\mathfrak K}}
\newcommand{\fkM}{{\mathfrak M}}
\newcommand{\fkT}{{\mathfrak T}}
\newcommand{\rmD}{{\mathrm D}}
\newcommand{\rmL}{{\mathrm L}}
\newcommand{\bfn}{{\mathbf n}}
\newcommand{\bfu}{{\mathbf u}}
\newcommand{\rmd}{{\mathrm d}}
\newcommand{\rme}{{\mathrm e}}
\newcommand{\rmm}{{\mathrm m}}
\newcommand{\ovlineC}[1]{\overline C_{\left[#1\right]}}
\definecolor{DarkBlue}{rgb}{0,0.08,0.45}
\definecolor{DarkRed}{rgb}{.65,0,0}
\definecolor{applegreen}{rgb}{0.55, 0.71, 0.0}
\newcounter{mymac@matlab}
\newcommand{\matlab}{MATLAB%
   \ifnum\value{mymac@matlab}<1%
   \textregistered%
   \setcounter{mymac@matlab}{1}%
   \fi%
  }
\newcommand{\Addresses}{{
\footnote{
\hspace{-1.4em}\textsuperscript{1}Department of Mathematics, Indian Institute of Technology Roorkee (IITR),\\
\hspace*{2em}Roorkee,  Uttarakhand 247667, India.\par\nopagebreak
\noindent \textsuperscript{2}Johann Radon Institute for Computational and Applied Mathematics (RICAM), OeAW,\\
\hspace*{2em}Altenbergerstrasse 69, 4040 Linz, Austria.\par\nopagebreak
\noindent  \textit{Emails:} {\small\texttt{arbaz@ma.iitr.ac.in} (A.~Khan); \texttt{sumit{\_}m@ma.iitr.ac.in} (S.~Mahajan);\\
\hspace*{3.7em}\texttt{sergio.rodrigues@ricam.oeaw.ac.at} (S.~Rodrigues)}.\\
\textit{Keywords}: parabolic equations with memory, dynamic output-feedback stabilization, continuous data assimilation, strongly positive kernels,
 explicit feedback-input and output-injection operators.\\
MSC(2020):  93B52, 93C50, 93C05, 93C20,  93D15,
}}}
\begin{document}
\title[Feedback stabilization of parabolic equations with memory]{
Dynamic output-based feedback stabilizability for linear parabolic equations with memory\Addresses}
\author[A. Khan, S. Mahajan, and S.S. Rodrigues]{Arbaz Khan\textsuperscript{1}, Sumit Mahajan\textsuperscript{1} and S\'ergio S. Rodrigues\textsuperscript{2}}

\begin{abstract}
The stabilizability of a general class of linear parabolic equations with a memory term, is achieve by explicit output feedback. The control input is given as a function of a state-estimate provided by an exponential dynamic Luenberger observer based on the output of sensor measurements. The numbers of actuators and sensors are finite. The feedback input and output injection operators are given explicitly involving appropriate orthogonal projections. For exponential kernels, exponential stabilizability can be achieved with the rate of the exponential kernel. The discretization and simulation of the controlled systems are addressed as well and results of simulations are reported showing the performance of the proposed dynamic output-based control feedback input. We include simulations for both exponential and weakly singular Riesz kernels, showing the success of the strategy in obtaining a stabilizing input.
\end{abstract}

\maketitle

\section{Introduction}\label{sec:Intro}
We investigate a class of parabolic equations with memory, defined for time~$t>0$, as
\begin{subequations}\label{sys-y-Intro}
\begin{align}
&\tfrac{\partial}{\partial t}{y}(x,t)= (\nu \Delta -\Id)y(x,t) -  a(x,t) y(x,t) -b(x,t)\cdot\nabla y(x,t)\notag\\
&\hspace{4em} + \eta\int_0^t \fkK(t-s)\Delta y(x,s) \mathrm{d}s +{\textstyle\sum\limits_{j=1}^{M_\sigma}}u_j(t)              \indf_{\omega^j}(x),\\
&\mathfrak{B}y(x,t)|_{\partial\Omega} = 0, \quad \quad y(x,0) = y_0(x),\\
&w(t)=(w_1(t),\dots, w_{S_\varsigma}(t)),\quad\mbox{with}\quad w_i(t)\coloneqq \int_{\omega_{\rm m}^i}y(x,t)\,\rmd\Omega,\quad 1\le i\le S_\varsigma,
\end{align}
\end{subequations}	
where~$\Omega\subset\bbR^d$, with~$d$ a positive integer, is an open convex polygonal/polyhedral subset, the diffusion coefficient is a constant~$\nu>0$, and~$a(x,t)\in\bbR$ and~$b(x,t)\in\bbR^d$ are the reaction and a convection coefficients, repectively. Further, $y_0$ is the initial state, and the linear operator~$\fkB$ defines the conditions on the boundary~$\p\Omega$ of~$\Omega$. Our results are valid for classical boundary conditions of type Dirichlet and Neumann, 
\begin{align}
&\mathfrak{B} = \Id, &&\text{for Dirichlet boundary conditions},\\
&\mathfrak{B} = \bfn\cdot\nabla ,&&\text{for Neumann boundary conditions},
\end{align}
where~$\Id$ denotes the identity operator and~$\bfn=\bfn(\overline x)$ denotes the unit outward normal vector at a point~$\overline x\in\p\Omega$.
In the case the memory term involves the kernel
\begin{align}\label{exk-intro}
\fkK(\tau)\coloneqq\rme^{-\gamma\tau}, \quad 0<\gamma,\tag{\theequation;[\rm sm]}
\end{align}
we are able to show that exponential stabilizability can be achieved with rate~$\gamma$. We shall present the results of simulations confirming this rate. We further present results of simulations for the case of Riesz weakly singular kernels as
\begin{align}\stepcounter{equation}\label{wsk-intro}
\fkK(\tau)\coloneqq\tau^{\gamma-1}, \quad 0<\gamma<1,\tag{\theequation;[\rm ws]}
\end{align}
showing that the strategy is likely successful in this case as well,  in providing an asymptotic decrease of the norm of the state, but where the stability rate is not clear.

The memory term encapsulates information from past history; its coefficient is a nonnegative constant~$\eta>0$.
Finally, $w(t)\in\bbR^{S_\varsigma}$ is the output of measurements of the mean~$w_i(t)$ of the state in given subdomains~$\omega_\rmm^i\subset\Omega$, and $u(t)\in\bbR^{M_\sigma}$ is the control input at time~$t$, which is used to tune a finite number of given actuators represented by indicator functions~$\indf_{\omega^j}$ of given subdomains~$\omega^j\subseteq\Omega$, $1\le j\le M_\sigma$.
	
Note that, we denote the set of actuators and sensors by
\begin{align}\label{UMWS}
U_M\coloneqq\{\indf_{\omega^j}\mid 1\le j\le M_\sigma\},\qquad W_S\coloneqq\{\indf_{\omega_\rmm^j}\mid 1\le j\le S_\varsigma\},
\end{align}
we can write the control and output operators as
\begin{subequations}\label{CtOutOPs}
\begin{align}
&U_M^\diamond\colon\bbR^{M_\sigma}\to \linspan U_M, &&\; U_M^\diamond v\coloneqq{\textstyle\sum\limits_{j=1}^{M_\sigma}}v_j\indf_{\omega^j};\\
&W_S^{\vee}\colon \rmL^2(\Omega)\to \bbR^{S_\varsigma},&&\; W_S^{\vee}h\coloneqq\Bigl( (\indf_{\omega_{\rm m}^1},h)_{\rmL^2(\Omega)},\,\dots,\, (\indf_{\omega_{\rm m}^{S_\varsigma}},h)_{\rmL^2(\Omega)}\Bigr).
\end{align}
\end{subequations}
	
We shall also need the operators~$U_M^\vee$ and~$W_S^{\diamond}$, defined in an analogous way,
\begin{subequations}\label{CtOutOPs-aux}
\begin{align}
&W_S^\diamond\colon\bbR^{S_\varsigma}\to \linspan W_S, &&\; W_S^\diamond v\coloneqq{\textstyle\sum\limits_{j=1}^{S_\varsigma}}v_j\indf_{\omega_{\rmm}^j};\\
&U_M^{\vee}\colon \rmL^2(\Omega)\to \bbR^{M_\sigma},&&\; U_M^{\vee}h\coloneqq\Bigl( (\indf_{\omega^1},h)_{\rmL^2(\Omega)},\,\dots,\, (\indf_{\omega^{M_\sigma}},h)_{\rmL^2(\Omega)}\Bigr).
\end{align}
\end{subequations}

\begin{remark}\label{rem:seqUMWS}
In~\eqref{UMWS}, $M_\sigma$ and~$S_\varsigma$ are positive integers. Our result will hold for large enough numbers~$M_\sigma$ of actuators and~$S_\varsigma$ of sensors. To present/prove the result (cf.~\cite{Rod21-aut}), it will be convenient (cf. Sect.~\ref{ssec:satisf-assKMLS}) to construct the families of actuators and sensors by taking  appropriate subsequences of natural numbers~$M_\sigma\coloneqq\sigma(M)$ and~$S_\varsigma\coloneqq\varsigma(S)$. For the moment, the reader may think of~$M_\sigma=M$ and~$S=S_\sigma$.
\end{remark}
	
\subsection{Dynamic and static output-feedback}\label{ssec:UMdisjWS}
In this manuscript we consider the general case where the measurements are (or may be) taken in subdomains~$\omega_{\rmm}^{j}$ that are different from those~$\omega^{j}$ where the control forcing is acting and construct a {\em dynamic} output-based feedback stabilizing control, depending {\em implicitly} on the output, more precisely, the input depends on a state-estimate provided by a dynamic Luenberger observer, which in turn makes use the available output. The particular case~$U_M=W_S$ shall be shortly addressed in Sect.~\ref{sec:finrem-static-output}, this particular case is interesting because we can construct a {\em static} output-based feedback stabilizing control, depending {\em explicitly} on the output, in other words, in this case the Luenberger observer and corresponding state-estimate is not needed, which will make the computations in real-world applications considerably faster.

\subsection{Dynamic output-based feedback stabilization}\label{ssec:OutFeed-Intro}
We write~\eqref{sys-y-Intro} as an abstract evolutionary equation
\begin{align}\label{sys-y-Intro-A}
\dot{y}&= -A y -  A_{\rm rc} y -\eta\clA_\rmm [y] +U_M^\diamond u,
\qquad y(0) = y_0;\qquad w=W_S^{\vee} y;
\end{align}
with state evolving in the Hilbert pivot space~$H=\rmL^2(\Omega)$. Here~$[y](t)\coloneqq y\rest{(0,t)}=\{y(s)\mid 0< s< t\}$ denotes the history of the state up to time~$t$. The diffusion-like~$A$, reaction--convection-like~$A_{\rm rc}$, memory-like~$\clA_\rmm$,  control~$U_M^\diamond$, and output~$W_S^{\vee}$ operators will be specified later on, together with their required properties.
In particular, this abstract formulation will allow us to simplify the exposition and to derive a result for a class of evolutionary equations, which can potentially be applied to other concrete examples/equations of parabolic type.
Our goal is to find, for a suitable class of so-called positive kernels~$\fkK$, a feedback input operator~$K_M^{[\lambda_1]}\colon y\mapsto u$ (cf.~\cite[Part~I, Sect.~2.5]{Zabczyk08}) so that, system~\eqref{sys-y-Intro-A} is asymptotically stable. For exponential kernels~\eqref{exk-intro}, we will have, for suitable constants~$D_1\ge1$ and~$\mu_1>0$, 
\begin{align}\label{goal-exp-y-Intro}
\norm{y(t)}{H} \le D_1\rme^{-\mu_1t}\norm{y_0}{H}.
\end{align}

Note that we consider~$K_M^{[\lambda_1]}$ depending on the number~$M_\sigma$ of actuators and on a scalar parameter~$\lambda_1\ge0$. The latter will serve as a scaling feedback gain later on.

Let us assume that we have such a stabilizing feedback~$K_M^{[\lambda_1]}$ at our disposal. In real world applications, the state~$y(t)$ may be not available in its entirely, in that case the computation of the input~$u(t)=K_M^{[\lambda_1]}y(t)$ is not possible. However, once we have an estimate~$\widehat y(t)$ for~$y(t)$, we can use the approximation~$\widehat u(t)=K_M^{[\lambda_1]}\widehat y(t)$ for the desired input~$u(t)\in\bbR^M$.
A classical approach is to take the state estimate~$\widehat y(t)$ provided by a Luenberger observer. Here, we  need a finite number of sensors measuring some components of the unknown~$y(t)$ (e.g., the mean/average as in~\eqref{sys-y-Intro}), the output~$z=W_S^{\vee} y(t)\in\bbR^S$ is then injected into the dynamics of the observer so that the provided estimate improves as time increases (cf.~\cite[Part~I, Sect.~2.6]{Zabczyk08}), for example as
\begin{align}\label{goal-exp-esterror-Intro}
\norm{\widehat y(t)-y(t)}{H} \le D_{2} \mathrm{e}^{-\mu_{2} t}\norm{\widehat y_0-y_0}{H}.
\end{align}
for some~$D_2\ge1$ and~$\mu_2>0$. 
Essentially, we arrive at a closed-loop coupled system
\begin{subequations}\label{sys-CL-Intro}
\begin{align}
\dot{y}&= -A y -  A_{\rm rc} y -  \eta\clA_\rmm [y] +U_M^\diamond K_M^{[\lambda_1]} \widehat y,
&&\; y(0) = y_0,\label{sys-CL-Intro-y}\\
\dot{\widehat y}&= -A \widehat y -  A_{\rm rc} \widehat y -   \eta\clA_\rmm [\widehat y] +U_M^\diamond K_M^{[\lambda_1]} \widehat y +L_S^{[\lambda_2]}(W_S^{\vee} \widehat y-W_S^{\vee} y),
&&\; \widehat y(0) = \widehat y_0,\label{sys-CL-Intro-haty}
\end{align}
\end{subequations}	
where~$\widehat y_0$ is an estimate we might have for~$y_0$. Thus, besides the feedback input operator~$K_M^{[\lambda_1]}$, we need to find the output injection operator~$L_S^{[\lambda_2]}$ as well, so that~\eqref{goal-exp-y-Intro} holds true. Again, besides the number~$S$ of sensors, we take the output injection operator depending on a parameter~$\lambda_2\ge0$, which will be used as a scaling factor.
	
By writing~\eqref{sys-CL-Intro} in ``coordinates'' $(z,y)\coloneqq(\widehat y-y,y)$, with~$z_0\coloneqq\widehat y_0-y_0$, we find
\begin{subequations}\label{sys-CL-Intro-error}
\begin{align}
\dot{y}&= -A y -  A_{\rm rc} y -  \eta\clA_\rmm [y] +U_M^\diamond K_M^{[\lambda_1]} y +U_M^\diamond K_M^{[\lambda_1]} z,
&&\quad y(0) = y_0,\label{sys-CL-Intro-error-y}\\
\dot{z}&= -A z -  A_{\rm rc} z -   \eta\clA_\rmm [z]  +L_S^{[\lambda_2]}W_S^{\vee}  z,
&&\quad z(0) = z_0,\label{sys-CL-Intro-error-z}
\end{align}
\end{subequations}	
which we want to make exponentially stable. More precisely, we  look for~$L_S^{[\lambda_2]}$ so that the component~\eqref{sys-CL-Intro-error-z} is   exponentially stable, and for~$K_M^{[\lambda_1]}$ so that the unperturbed version of~\eqref{sys-CL-Intro-y} (i.e., with~$U_M^\diamond K_M^{[\lambda_1]} z=0$) is  asymptotically stable as well.
Such a pair~$(K_M^{[\lambda_1]},L_S^{[\lambda_2]})$ will provide the exponential stability of the closed-loop system as
\begin{align}\label{goal-exp-CLerror-Intro}
&\norm{(z,y)(t)}{H\times H} \le D\rme^{-\mu(t-s)}\norm{(z,y)(s)}{H\times H}.
\end{align}

\begin{mainresult}
For exponential kernels and~$\gamma$ as in~\eqref{exk-intro}, we can find  the feedback-input~$K_M^{[\lambda_1]}$ and output-njection~$L_S^{[\lambda_2]}$ operators, such that the solution of system~\eqref{sys-CL-Intro-error} satisfies~\eqref{goal-exp-CLerror-Intro} with~$\mu=\gamma$, for some~$D\ge1$ and for all~$(z_0,y_0)\in H\times H$.
\end{mainresult}

A more precise statement of this result is given as Theorem~\ref{thm:stab}, which is proven under some assumptions on the families~$\{K_M^{[\lambda]}\mid (M,\lambda)\in\bbN_+\times\overline\bbR_+\}$ and~$\{L_S^{[\lambda]}\mid (S,\lambda)\in\bbN_+\times\overline\bbR_+\}$. The satisfiability of these assumptions is proven in Section~\ref{ssec:satisf-assKMLS}, where these families are given explicitly together with the supports~$\omega^j$ of actuators and~$\omega^j_\rmm$ sensors.
From the Main Result above, the rate~$\mu=\gamma$ given by the exponential kernel can be guaranteed for the exponential stability of the closed-loop system. Simulations will confirm this fact. These simulations also suggest that it may be not possible to improve this rate (e.g., by simply tuning the parameters~$(M,S,\lambda_1,\lambda_2)$ within the used class of feedback-input and output-injection operators as above).
	
\subsection{Literature review}  
This manuscript addresses the stabilization of nonautonomous parabolic equations with memory using  projections-based feedback-input operators. We follow the strategy presented in~\cite{Rod21-aut} (for the memoryless case~$\clA_\rmm=0$), and some developments from~\cite{KunRodWal21}. The feedback stabilization of parabolic equations have been extensively investigated in the literature, but apparently there are no such feedback stabilization results in the literature for dynamics including a memory term.

Projections-based feedbacks, as introduced in~\cite{KunRod19-cocv} in the context of full-state based feedback are given explicitly and can be seen as an alternative to Riccati-based feedbacks.  In~\cite{Rod21-aut} it is shown that a simpler form can be taken which can be used in observer design (or, continuous data assimilation) as well.  We mention also the works~\cite{Barbu13-tac,Barbu12} proposing explicit feedbacks, as alternative to Riccati ones, in the context of autonomous systems and boundary controls. The explicit form of the feedbacks is important for applications where computing the  Riccati based feedbacks is very expensive or unfeasible~\cite[Sect.~7]{Rod23-eect}.
In~\cite{KunRodWal21} some extensions can be found, of the result in~\cite{Rod21-aut}, in the context of full-state stabilization of semilinear systems. For nonlinear dissipative systems we refer to the explicit feedbacks proposed in~\cite{AzouaniTiti14}. 
	
We would like to refer also to~\cite{Munteanu15} where, in the autonomous case, a simple feedback input is designed to stabilize parabolic equations with fading memory by means of controls acting on the boundary of the domain.

The mathematical theory on the stabilization of nonautonomous parabolic-like equations was likely initiated in~\cite{BarbuRodShi11}, in the context of the Navier--Stokes equations. There, the null controllability of the linearized system was explicitly used to derive the Riccati based stabilizability result. Here, instead we use oblique projections based feedbacks, which do not use the exact null controllability of the system. This is important, due to known negative results for the null controllability of the heat equation with memory \cite{GuerreroImanuv13,IvanovPandolfi09}.
We refer the reader to~\cite{KumarPaniJoshi22,LefterLorenzi12} for results on  approximate controllability, and to~\cite{CasasYong23} for results on (finite-horizon) optimal control.

Concerning state estimation, we follow the pioneering studies by Luenberger \cite{Luenberger64,Luenberger66} in the context of finite-dimensional systems. We construct a dynamic Luenberger observer as~\eqref{sys-CL-Intro-haty}; see~\cite{Rod21-aut,AzouaniOlsonTiti14} in the context of parabolic-like memoryless equations. 
	
Concerning numerical simulations, the discretization of parabolic equations with a memory term is an interesting problem, namely, due to the presence of the memory integral term, we refer the reader to the works~\cite{SloanThomee86,LeRouxThomee89,BakaevLarssonThomee98,MahajanKhan24,MahajanKhanMohan24} and references therein. 
	
\subsection{Outline of paper and notation}
In Section~\ref{sec:assum}, general abstract assumptions are made for the operators~$A$, $A_{\rm rc}$, $K$, $\clA_\rmm$, $K_M$, and~$L_S$, defining the dynamics of the closed-loop system.  The proof of the stability of the closed-loop coupled system is given in Section~\ref{sec:stability}. In Section~\ref{sec:satisf-assum}, it is shown that the assumptions made in Section~\ref{sec:assum} are satisfiable, namely, in the context of concrete scalar parabolic equation with memory terms involving positive kernels, and where the actuators and sensors are given by indicator functions of small spatial subdomains.  Sections~\ref{sec:numresults} and~\ref{sec:num-sim-res} are dedicated to the presentation and discussion of several numerical results validating the theoretical findings. In Section~\ref{sec:finrem-static-output}, we address the particular, but important, case where the input depends only (and explicitly) on the output. Finally,  Section~\ref{sec:finalremarks} gathers comments on the theoretical and computational aspects and on potential future works.
	
Throughout this manuscript, we denote the set of real numbers and non-negative integers as $\mathbb{R}$ and $\mathbb{N}$, respectively. Further, we write~$\mathbb{R}_{+}:=(0,+\infty)$ and $\mathbb{N}_{+}:=\mathbb{N} \setminus\{0\}$.

Given Banach spaces $X$ and $Y$, we denote the space of continuous linear mappings from $X$ into $Y$ as $\mathcal{L}(X, Y)$. If $X=Y$, we simplify this notation to $\mathcal{L}(X):=\mathcal{L}(X, X)$. The continuous dual of $X$ is represented by $X^{\prime}:=\mathcal{L}(X, \mathbb{R})$. 
The space of continuous functions from $X$ into $Y$ is denoted by $\mathcal{C}(X, Y)$. The orthogonal complement to a subset $B \subset H$ of a real Hilbert space $H$, with scalar product $(\cdot, \cdot)_{H}$, is denoted $B^{\perp}:=\{h \in H \mid$ $(h, s)_{H}=0$ for all $\left.s \in B\right\}$.

By $\ovlineC{a_{1}, \ldots, a_{n}}$ we denote a non-negative function that increases in each of its non-negative arguments $a_{i}, 1 \leq i \leq n$ and $C, C_{i}, i=0,1, \ldots$ denote generic positive constants that may take different values at different places within the manuscript.

\section{Assumptions}\label{sec:assum}
We present the general assumptions we require for the operators defining the     dynamics of the closed-loop system~\eqref{sys-CL-Intro}.
	
Let~$V$ and~$H=H'$ be two real separable Hilbert spaces.

\begin{assumption}\label{ass:A}		
The inclusion $V\subseteq H$ is dense, continuous, and compact. The operator
$A\in\clL(V,V')$ is symmetric and $(y,z)\mapsto\langle Ay,z\rangle_{V',V}$ is a complete scalar product on~$V.$
\end{assumption}

\begin{assumption}\label{ass:Arc}
For almost every $t>0$, we have~$A_{\mathrm{rc}}(t) \in \mathcal{L}\left(H, V^{\prime}\right)+\mathcal{L}(V, H)$. Further, $\left|A_{\mathrm{rc}}\right|_{\rmL^{\infty}\left(\mathbb{R}_{0}, \mathcal{L}\left(H, V^{\prime}\right)+\mathcal{L}(V, H)\right)}=: C_{\mathrm{rc}}<+\infty$.
\end{assumption}

\begin{assumption}\label{ass:KMLS}
For any given~$\varrho>0$, we have that:
\begin{enumerate}[itemindent=-2em,topsep=5pt,parsep=5pt,partopsep=5pt,leftmargin=2.5em]
\renewcommand{\theenumi}{{\rm(\alph{enumi})}} 
\renewcommand{\labelenumi}{}
\item\theenumi\label{ass:KMLS-a}
There exists~$M_*\in\bbN_+$ such that, for all~$M\ge M_*$ there exists~$\lambda_{1*}=\lambda_{1*}(M)\in\overline\bbR_+$ such that, for all~$\lambda_1\ge \lambda_{1*}$ we have that~$2\langle (A -U_M^\diamond K_M^{[\lambda_1]})w,w\rangle_{V',V}\ge\norm{w}{V}^2+\varrho\norm{w}{H}^2$, for all~$w\in V$;
\item\theenumi\label{ass:KMLS-b} There exists~$S_*\in\bbN_+$ such that, for all~$S\ge S_*$ there exists~$\lambda_{2*}=\lambda_{2*}(S)\in\overline\bbR_+$ such that, for all~$\lambda_{2}\ge \lambda_{2*}$ we have that~$2\langle (A -L_S^{[\lambda_2]}W_S^{\vee})w,w\rangle_{V',V}\ge\norm{w}{V}^2+\varrho\norm{w}{H}^2$, for all~$w\in V$.
\end{enumerate}
\end{assumption}			
		
Finally, we assume that the memory kernel is positive~\cite[Eq.~(2.2)]{MacCamyWong72}\cite[Def.~1]{NohelShea76}.
\begin{assumption}\label{ass:Xi}  
The memory term satisfies, for all~$t>0$ and all~$g\in\rmL^2(0,t;V)$,
\begin{align}\notag
0\le  \int_0^t \langle \clA_\rmm[g](s),g(s)\rangle_{V',V} \mathrm{d}s.
\end{align} 
\end{assumption} 

\begin{remark}
Assumption~\ref{ass:Xi} is enough for showing Lyapunov stabilizability, for kernels as~\eqref{exk-intro} and~\eqref{wsk-intro}. For the particular exponential kernels~\eqref{exk-intro}, we shall show the stronger exponential stabilizability.
\end{remark}

\section{Stability of the closed-loop coupled system}\label{sec:stability}
The following is a more precise statement of the Main Result announced in Section~\ref{ssec:OutFeed-Intro}.
\begin{theorem}\label{thm:stab}
Let Assumptions \ref{ass:A}--\ref{ass:Xi} hold true. Then there exists a pair $(M_*,S_*)\in\bbN_+\times\bbN_+$ such that, for all~$M\ge M_*$ and~$S\ge S_*$ there exist~$(\lambda_{1*},\lambda_{2*})=(\lambda_{1*}(M),\lambda_{2*}(S))\in\overline\bbR_+\times\overline\bbR_+$ such that, for all~$\lambda_1\ge \lambda_{1*}$ and all~$\lambda_2\ge \lambda_{2*}$ we have:
there exists a constant~$D_1\ge0$, such that the weak solution of system~\eqref{sys-CL-Intro-error} satisfies
\begin{subequations}\label{est-fkM-lyap}
\begin{align}
&\left(\norm{(z,y)(t)}{H}^2+\fkM(t)\right)\le (1+D_1)\rme^{-\gamma(t-s)}\left(\norm{(z,y)(s)}{H}^2+\fkM(s)\right).
\end{align}
for all~$(z_0,y_0)\in H\times H$,  with~$\fkM(t)\coloneqq\fkM_z(t)+\fkM_y(t)$ and
\begin{align}
\fkM_g(t)\coloneqq 2\eta\int_0^t\langle \clA_\rmm[g](\tau),g(\tau)\rangle_{V',V}\,\rmd\tau\ge0.
\end{align}
\end{subequations}
In the particular case where the memory term is given by the kernel~\eqref{exk-intro} as
\[
\clA_\rmm[y](t)=\int_0^t \rme^{-\gamma(t-s)}(-\Delta) y(x,s) \mathrm{d}s,
\]
we have that the pair $(M_*,S_*)\in\bbN_+\times\bbN_+$ can be chosen (larger) so that
\begin{subequations}\label{est-fkM-exp}
\begin{align}
&\left(\rme^{2\gamma t}\norm{(z,y)(t)}{H}^2+\overline\fkM(t)\right)\le (1+D_1)\left(\rme^{2\gamma s}\norm{(z,y)(s)}{H}^2+\overline\fkM(s)\right),
\end{align}
where~$\overline\fkM(t)\coloneqq\overline\fkM_z(t)+\overline\fkM_y(t)$ and
\begin{align}
\overline\fkM_g(t)\coloneqq 2\eta\int_0^t\left\langle \int_0^\tau (-\Delta)(\rme^{\frac{\gamma}2 s}g(s))\rmd s,\rme^{\gamma\tau}g(\tau)\right\rangle_{V',V}\rmd\tau\ge0.
\end{align}
\end{subequations}
\end{theorem}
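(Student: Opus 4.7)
The plan is to run the energy method on the two components of~\eqref{sys-CL-Intro-error} separately, exploiting the fact that the observer-error equation~\eqref{sys-CL-Intro-error-z} is decoupled from $y$, and using the augmented Lyapunov functional $\|\cdot\|_H^2+\fkM_\cdot$ so that the memory contribution cancels exactly in the energy identity.

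First I would test~\eqref{sys-CL-Intro-error-z} with $z$ in $H$ and double, obtaining
\[
\tfrac{\rmd}{\rmd t}\|z\|_H^2=-2\langle(A-L_S^{[\lambda_2]}W_S^{\vee})z,z\rangle_{V',V}-2\langle A_{\rm rc}z,z\rangle_{V',V}-2\eta\langle\clA_\rmm[z](t),z(t)\rangle_{V',V}.
\]
The last term is exactly $-\fkM_z'(t)$, so rearranging gives
\[
\tfrac{\rmd}{\rmd t}(\|z\|_H^2+\fkM_z)=-2\langle(A-L_S^{[\lambda_2]}W_S^{\vee})z,z\rangle_{V',V}-2\langle A_{\rm rc} z,z\rangle_{V',V}.
\]
I would apply Assumption~\ref{ass:KMLS}(b) to lower-bound the first term by $\|z\|_V^2+\varrho\|z\|_H^2$, split $A_{\rm rc}\in\clL(H,V')+\clL(V,H)$ as in Assumption~\ref{ass:Arc}, and use Young's inequality to absorb $2|\langle A_{\rm rc}z,z\rangle_{V',V}|\le\tfrac12\|z\|_V^2+C^{**}\|z\|_H^2$, with $C^{**}$ depending only on $C_{\rm rc}$. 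Choosing $\varrho$ large enough to dominate $C^{**}+2\gamma$ produces the differential inequality $\tfrac{\rmd}{\rmd t}(\|z\|_H^2+\fkM_z)+\tfrac12\|z\|_V^2+2\gamma\|z\|_H^2\le0$.

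For the $y$-equation~\eqref{sys-CL-Intro-error-y} I would repeat the same computation using Assumption~\ref{ass:KMLS}(a) instead, the only novelty being the cross-perturbation $2(U_M^\diamond K_M^{[\lambda_1]}z,y)_H$, which I would absorb via Young's inequality as $\le\e\|y\|_H^2+C_{M,\lambda_1,\e}\|z\|_H^2$, using the boundedness of $U_M^\diamond K_M^{[\lambda_1]}$ on $H$. Adding this estimate to a sufficiently large multiple $\beta$ of the $z$-estimate eliminates the $\|z\|_H^2$ forcing, and a Gronwall-type argument applied to the combined Lyapunov quantity $\beta(\|z\|_H^2+\fkM_z)+(\|y\|_H^2+\fkM_y)$ delivers~\eqref{est-fkM-lyap}, with $D_1$ proportional to $\beta-1$.

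For the exponential-kernel refinement~\eqref{est-fkM-exp} I would exploit the separability $\rme^{-\gamma(t-s)}=\rme^{-\gamma t}\rme^{\gamma s}$, which encodes that $\clA_\rmm[g]$ satisfies the ODE $\tfrac{\rmd}{\rmd t}\clA_\rmm[g]=-\gamma\clA_\rmm[g]+(-\Delta)g$. This lets me rerun the energy identity on the rescaled quantity $\rme^{2\gamma t}\|g\|_H^2$: the extra term $2\gamma\rme^{2\gamma t}\|g\|_H^2$ generated by differentiating the weight is controlled by further enlarging $\varrho$, while the memory contribution reassembles into the derivative of the modified functional $\overline\fkM_g$ exhibited in the statement, so the same cancellation keeps the augmented Lyapunov functional non-increasing, which is~\eqref{est-fkM-exp}. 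The main obstacle is identifying this correct augmented functional: by Assumption~\ref{ass:Xi} the integrand $\langle\clA_\rmm[g],g\rangle_{V',V}$ is only non-negative after integration in time, so one cannot close the energy estimate on $\|g\|_H^2$ alone; appending $\fkM_g$ (respectively $\overline\fkM_g$) is what makes the memory drop out of the identity, while Assumption~\ref{ass:Xi} then serves to ensure the Lyapunov quantity dominates $\|g\|_H^2$.
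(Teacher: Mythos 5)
Your proposal is correct and follows essentially the same route as the paper: componentwise energy estimates with the augmented functionals $\norm{\cdot}{H}^2+\fkM_\cdot$ so that the memory term cancels in the identity, Assumption~\ref{ass:KMLS} (with $\varrho$ enlarged to absorb the reaction--convection term and, later, the weight $\gamma$), Young's inequality for the cross term $2(U_M^\diamond K_M^{[\lambda_1]}z,y)_H$, and the $\rme^{2\gamma t}$ rescaling with the modified functional $\overline\fkM_g$ for exponential kernels. The only cosmetic differences are that you absorb the $\norm{z}{H}^2$ forcing by adding a large multiple of the $z$-inequality at the differential level, whereas the paper integrates the $z$-estimate first and substitutes the resulting bound on $\int_s^t\norm{z}{H}^2\,\rmd\tau$, and that the nonnegativity of $\overline\fkM_g$ should be justified by the positivity of the constant kernel (as in Remark~\ref{rem:constker}) rather than by Assumption~\ref{ass:Xi} directly.
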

\begin{proof}
Testing the state-estimate error dynamics in~\eqref{sys-CL-Intro-error-z} with~$2 z$, we obtain
\begin{align}
\tfrac{\rmd}{\rmd t}\norm{z}{H}^2&= -2\langle Az -L_S^{[\lambda_2]}W_S^{\vee} z,z\rangle_{V',V}  -  2\langle A_{\rm rc} z +   \eta\clA_\rmm [z], z\rangle_{V',V}\notag\\
&\le-2\langle Az -L_S^{[\lambda_2]}W_S^{\vee} z,z\rangle_{V',V}   +2\ovlineC{C_{\rm rc}}\norm{z}{H}\norm{z}{V}-2\eta\langle  \clA_\rmm [z], z\rangle_{V',V}\label{est-error-1a}
\end{align}
where we used Assumption~\ref{ass:Arc},  which implies that~$2\langle A_{\rm rc} z,  z\rangle_{V',V}\le \ovlineC{C_{\rm rc}}\norm{z}{H}\norm{z}{V}$ (cf.~\cite[Lem.~3.1]{Rod21-sicon}).
By Young inequality, denoting~$\bfA_S^{[\lambda_2]}\coloneqq A -L_S^{[\lambda_2]}W_S^{\vee}$, we find
\begin{align}
\tfrac{\rmd}{\rmd t}\norm{z}{H}^2&\le -2\langle \bfA_S^{[\lambda_2]} z,z\rangle_{V',V} +\ovlineC{C_{\rm rc}}^2\norm{z}{H}^2 +\norm{z}{V}^2 -2\eta\langle   \clA_\rmm [z],  z\rangle_{V',V}.\label{est-error-1b}
\end{align}

Now, we fix an arbitrary constant~$\overline\mu>0$. By using Assumption~\ref{ass:KMLS}\ref{ass:KMLS-b}, with~$\varrho=2\overline\mu+\ovlineC{C_{\rm rc}}^2$, we can find~$S_*\in\bbN_+$, such that for any given~$S\ge S_*$, we can find~$\lambda_{2*}=\lambda_{2*}(S)\in\overline\bbR_+$ such that, for all~$\lambda_{2}\ge \lambda_{2*}$ we have that
\begin{align}
\tfrac{\rmd}{\rmd t}\norm{z}{H}^2&\le  -2\overline\mu\norm{z}{H}^2-2\eta\langle   \clA_\rmm [z],  z\rangle_{V',V}.\label{est-error-1c}
\end{align}

Now, time integration over~$(s,t)$ gives us
\[
\norm{z(t)}{H}^2-\norm{z(s)}{H}^2\le  -2\overline\mu\int_s^t\norm{z(\tau)}{H}^2\,\rmd\tau-2\eta\int_s^t\langle   \clA_\rmm [z],  z\rangle_{V',V}\,\rmd\tau\le-\fkM_z(t)+\fkM_z(s),
\]
with~$\fkM_z(t)$ as in~\eqref{est-fkM-lyap}, which we write as
\begin{equation}
\norm{z(t)}{H}^2+\fkM_z(t)+2\overline\mu\int_s^t\norm{z(\tau)}{H}^2\,\rmd\tau\le\norm{z(s)}{H}^2+\fkM_z(s).\label{est-z-lyap}
\end{equation}

By testing~\eqref{sys-CL-Intro-error-y} with~$2 y$ we obtain the analogue of~\eqref{est-error-1b},
\begin{align}
\tfrac{\rmd}{\rmd t}\norm{y}{H}^2&\le -2\langle \bfA_M^{[\lambda_1]}y,y\rangle_{V',V} +\ovlineC{C_{\rm rc}}^2\norm{y}{H}^2 -2{\eta}\langle   \clA_\rmm [y],  y\rangle_{V',V}+2(U_M^\diamond K_M^{[\lambda_1]} z, y)_{H}.\hspace{-2em}
\end{align}
Hence, with~$\bfA_M^{[\lambda_1]}\coloneqq A -U_M^\diamond K_M^{[\lambda_1]} $ instead of~$A-L_S^{[\lambda_2]}W_S^{\vee}$,  and with the extra term~$2(U_M^\diamond K_M^{[\lambda_1]} z, y)_{H}$, which we estimate as follows,
\begin{align}
2(U_M^\diamond K_M^{[\lambda_1]} z,y)_{H}
&\le2\mu\norm{y}{H}^2 +2^{-1}\mu^{-1}\norm{U_M^\diamond K_M^{[\lambda_1]} z}{H}^2\le2\epsilon\norm{y}{H}^2 +\epsilon^{-1}C_1\norm{z}{H}^2\label{est-y-3}
\end{align}
for all~$\epsilon>0$, with $C_1\coloneqq2^{-1}\norm{U_M^\diamond K_M^{[\lambda_1]}}{\clL(H)}^2$. Note that, we can repeat the arguments above, simply by using Assumption~\ref{ass:KMLS}\ref{ass:KMLS-a} instead of~\ref{ass:KMLS}\ref{ass:KMLS-b}, to conclude that we can find~$M_*\in\bbN_+$, such that for any~$M\ge M_*$, we can find~$\lambda_{1*}=\lambda_{1*}(M)\in\overline\bbR_+$ such that, for all~$\lambda_{1}\ge \lambda_{1*}$ we have
the analogue of~\eqref{est-error-1c} as
\begin{align}
\tfrac{\rmd}{\rmd t}\norm{y}{H}^2&\le  -2\overline\mu\norm{y}{H}^2-2{\eta}\langle   \clA_\rmm [y],  y\rangle_{V',V}+2\epsilon\norm{y}{H}^2 +\epsilon^{-1}C_1\norm{z}{H}^2\notag\\
&\le  -2(\overline\mu-\epsilon)\norm{y}{H}^2-2{\eta}\langle   \clA_\rmm [y],  y\rangle_{V',V} +\epsilon^{-1}C_1\norm{z}{H}^2,\label{est-y-1c}
\end{align}
In particular, choosing~$\epsilon< \overline\mu$ and denoting~$\mu\coloneqq\overline\mu-\epsilon$, by time integration we find
\[
\norm{y(t)}{H}^2-\norm{y(s)}{H}^2\le  -2\mu\int_s^t\norm{y(\tau)}{H}^2\,\rmd\tau+(\overline\mu-\mu)^{-1}C_1\int_s^t\norm{z(\tau)}{H}^2\,\rmd\tau-\fkM_y(t)+\fkM_y(s),
\]
with~$\fkM_y(t)$ as in~\eqref{est-fkM-lyap}. After reordering and using~\eqref{est-z-lyap}, we obtain
\begin{align}
&\norm{y(t)}{H}^2+\fkM_y(t)+2\mu\int_s^t\norm{y(\tau)}{H}^2\,\rmd\tau\notag\\
&\hspace{3em}\le  \norm{y(s)}{H}^2+\fkM_y(s)+C_1(\overline\mu-\mu)^{-1}(2\overline\mu)^{-1}\left(\norm{z(s)}{H}^2+\fkM_z(s)\right).\label{est-y-lyap}
\end{align}
By combining~\eqref{est-y-lyap} with~\eqref{est-z-lyap} we arrive at~\eqref{est-fkM-lyap} with
$D_1\coloneqq 1+C_1(\overline\mu-\mu)^{-1}(2\overline\mu)^{-1}$.

To show~\eqref{est-fkM-exp}, we restart from~\eqref{est-error-1b}, and using Assumption~\ref{ass:KMLS}\ref{ass:KMLS-b}, again with the larger~$\varrho=\gamma+2\overline\mu+\ovlineC{C_{\rm rc}}^2$. In this way we arrive at the analogue of~\eqref{est-z-lyap} as follows, for~$\varphi_z(t)\coloneqq\rme^{2\gamma t}\norm{z(t)}{H}^2$,
\begin{align}
\tfrac{\rmd}{\rmd t}\varphi_z&=\gamma\varphi_z+\rme^{2\gamma t}\tfrac{\rmd}{\rmd t}\norm{z(t)}{H}^2\le  -2\overline\mu\varphi_z(t)-2\eta\langle   \clA_\rmm [z](t),  \rme^{2\gamma t}z(t)\rangle_{V',V}.\label{est-error-exp}
\end{align}
Now, time integration gives
\[
\varphi_z(t)-\varphi_z(s)+2\mu\int_s^t\norm{\varphi_z(\tau)}{H}^2\,\rmd\tau\le -2\eta\int_s^t\langle\rme^{2\gamma \tau}z(\tau),   \clA_\rmm [z](\tau)\rangle_{V',V}\,\rmd \tau.
\]
and, in the particular case of the exponential kernel we find
\begin{align*}
&2\eta\int_s^t\langle\rme^{2\gamma \tau}z(\tau),   \clA_\rmm [z](\tau)\rangle_{V',V}\,\rmd \tau=2\eta\int_s^t\langle\rme^{2\gamma \tau}z(\tau), \int_0^\tau  \rme^{-\gamma(\tau-r)}Az(r)\rangle_{V',V}\,\rmd r\,\rmd \tau\\
&\hspace{3em}=2\eta\int_s^t\langle\rme^{\gamma \tau}z(\tau), \int_0^\tau  A(\rme^{\gamma r}z(r))\rangle_{V',V}\,\rmd r\,\rmd \tau
=\overline\fkM_z(t)-\overline\fkM_z(s),
\end{align*}
with~$\overline\fkM_z(t)$ as in~\eqref{est-fkM-exp}, which leads to the analogue of~\eqref{est-z-exp},
\begin{align}
\varphi_z(t)+\overline\fkM_z(t)+2\mu\int_s^t\norm{\varphi_z(\tau)}{H}^2\,\rmd\tau\le \varphi_z(s)+\overline\fkM_z(s).\label{est-z-exp}
\end{align}
Note that~$\overline\fkM_z(t)\ge0$ (because the constant kernel~$\fkK(t)=1$ is positive; cf.~Rem.~\ref{rem:constker}).

Arguing as above, with~$\varphi_y(t)\coloneqq\rme^{2\gamma t}\norm{y(t)}{H}^2$, will give us the analogue of~\eqref{est-y-lyap},
\begin{align}
&\varphi_y(t)+\overline\fkM_y(t)+2\mu\int_s^t\norm{\varphi_y(\tau)}{H}^2\,\rmd\tau\notag\\
&\hspace{3em}\le \varphi_y(s)+\overline\fkM_y(s)+C_1(\overline\mu-\mu)^{-1}(2\overline\mu)^{-1}\left(\norm{\varphi_z(s)}{H}^2+\overline\fkM_z(s)\right),\label{est-y-exp}
\end{align}
which together with~\eqref{est-z-exp}, allow us to obtain~\eqref{est-fkM-exp}.
\end{proof}

\section{Satisfiability of the assumptions}\label{sec:satisf-assum}
We apply the abstract result in Section~\ref{sec:stability} to the concrete system~\eqref{sys-y-Intro}. For that we write~\eqref{sys-y-Intro} in abstract form and show that the assumptions required in Section~\ref{sec:assum} are satisfied.

\subsection{Satisfiability of Assumptions~\ref{ass:A} and~\ref{ass:Arc}}\label{ssec:satisf-assAArc}	
We take the pivot space~$H=\rmL^2(\Omega)$ and the operator~$A\colon V\to V'$ as the scaled and shifted Laplacian in~$H$, under the given boundary conditions, as follows
\begin{align}\label{def.A}
\langle A y,z\rangle_{V',V}:=\nu(\nabla y,\nabla z)_{H^d}+(y,z)_{H}, \qquad\mbox{for}\quad (y,z)\in V\times V,
\end{align}
where $V=V_\fkB$ (depending on boundary conditions) is given by the Sobolev subspace
\begin{align}
&V = \{h\in W^{1,2}(\Omega)\mid h\rest{\p\Omega}=0\}, &&\text{for Dirichlet boundary conditions};\notag\\
&V = W^{1,2}(\Omega) ,&&\text{for Neumann boundary conditions}.\notag
\end{align}
We recall that the domain of~$A$ is given by
\begin{align}
&\rmD(A) = \{h\in W^{2,2}(\Omega)\mid h\rest{\p\Omega}=0\}, &&\text{for Dirichlet boundary conditions};\notag\\
&\rmD(A) = \{h\in W^{2,2}(\Omega)\mid \bfn\cdot\nabla h\rest{\p\Omega}=0\},&&\text{for Neumann boundary conditions}.\notag
\end{align}

We take the reaction and convection terms essentially bounded, 
\begin{align}\notag
a\in \rmL^\infty(\bbR_+\times\Omega)\quad\mbox{and}\quad b\in \rmL^\infty(\bbR_+\times\Omega)^d.
\end{align}

It is straightforward to check that Assumptions~\ref{ass:A} and~\ref{ass:Arc} are satisfied. In particular, with~$C_{\rm rc}=\ovlineC{\norm{a}{\rmL^\infty(\bbR_+\times\Omega)},\norm{b}{\rmL^\infty(\bbR_+\times\Omega)^d}}$.

\subsection{Satisfiability of Assumption~\ref{ass:KMLS}}\label{ssec:satisf-assKMLS}
We take  actuators and sensors as indicator functions of small subdomains as illustrated in Fig.~\ref{fig:suppActSensGrid}, combined in a chessboard pattern; see~\cite[Fig.~1]{Rod21-aut}. More precisely,
\begin{figure}[htbp]%
\centering%
\includegraphics[width=1\textwidth]{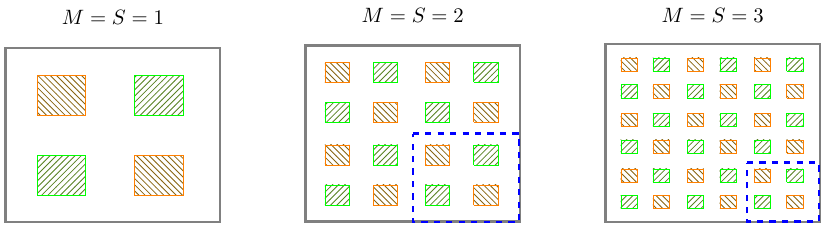}
\caption{Supports of actuators (slash-$\slash$-lines) and sensors (backslash-$\backslash$-lines).} \label{fig:suppActSensGrid}
\end{figure}
we consider  sequences of families~$(U_M)_{M\in\bbN_+}$  of actuators and~$(W_S)_{S\in\bbN_+}$ of sensors (cf.~\eqref{UMWS} and Rem.~\ref{rem:seqUMWS}) as
\begin{align}\label{ActSens}
U_M\coloneqq \{\indf_{\omega^j}\mid 1\le j\le M_\sigma\},\qquad W_S\coloneqq \{\indf_{\omega_\rmm^j}\mid 1\le j\le S_\varsigma\},
\end{align}
with supports as in Fig.~\ref{fig:suppActSensGrid} (in the case~$d=2$), thus with~$M_\sigma=4 M^d$ and~$M_\sigma=4S^d$. Essentially, the case~$M=1$ includes two actuators and  two sensors and the case~$M>1$ is the union of~$M^d$ rescaled copies of the case~$M=1$; we can see 
one of such copies underlined with a dashed line at the right-bottom corner. The same rescaling-and-copy strategy can be applied to general polygonal domains~$\Omega$ (cf.~\cite[Rem.~2.8]{AzmiKunRod23}, \cite[Sect.~3]{EdelsbrunnerGrayson00},  \cite[Thm.~4.1]{Bey00}).  Thus the arguments in~\cite{KunRodWal21} can be followed to derive the analogue of~\cite[Lem.~3.5]{KunRodWal21}
for both sensors and actuators, as follows. Let~$(\widetilde U_M)_{M\in\bbN_+}$ and~$(\widetilde W_S)_{S\in\bbN_+}$ be sequences of families of auxiliary functions~$\widetilde U_M=\{\Phi_j\mid 1\le j\le M_\sigma\}$ and~$\widetilde W_S=\{\Psi_j\mid 1\le j\le S_\varsigma\}$ so that the direct sums
\begin{equation}\notag
\widetilde \clU_M\oplus  \clU_M^\perp=H=\widetilde \clW_S\oplus  \clW_S^\perp
\end{equation}
hold true, 
where $\clU_M\coloneqq\linspan U_M$; $\widetilde \clU_M\coloneqq\linspan \widetilde U_M$; $\clW_S\coloneqq\linspan W_S$; $\widetilde \clW_S\coloneqq\linspan \widetilde W_S$.
Then, taking auxiliary functions as ``regularized indicator functions'' as in~\cite[Sect.~6]{Rod21-aut} we can see that the assumptions in~\cite[Assum.~2.4 and~2.5]{Rod21-aut} hold true. Then we can follow the arguments in~\cite{KunRodWal21} and arrive at the analogue of~\cite[Lem.~3.5]{KunRodWal21} as follows. 
\begin{lemma}\label{lem:actsens}
For every constant $\zeta> 0$, we can find~$M\in\bbN_+$, $S\in\bbN_+$, $\lambda_1=\lambda_1(M)\in\overline\bbR_+$ and~$\lambda_2=\lambda_2(S)\in\overline\bbR_+$ large enough such that
\begin{align}
\norm{y}{V}^2+2\lambda_1\norm{P_{\widetilde \clU_M}^{\clU_M^\perp}y}{H}^2\ge\zeta\norm{y}{H}^2\quad\mbox{and}\quad
\norm{y}{V}^2+2\lambda_2\norm{P_{\widetilde \clW_S}^{\clW_S^\perp}y}{H}^2\ge\zeta\norm{y}{H}^2,\notag
\end{align}
for all~$y\in V$. Further, $M=\ovlineC{\zeta}$, $S=\ovlineC{\zeta}$, $\lambda_1=\ovlineC{\zeta,\beta_{1,M}}$, and~$\lambda_2=\ovlineC{\zeta,\beta_{2,S}}$, with
\begin{align}
\beta_{1,M}\coloneqq\max_{\phi\in\widetilde\clU_M}\tfrac{\norm{\phi}{V}}{\norm{\phi}{H}}\quad\mbox{and}\quad
\beta_{2,S}\coloneqq\max_{\phi\in\widetilde\clW_S}\tfrac{\norm{\phi}{V}}{\norm{\phi}{H}}.\notag
\end{align}
\end{lemma}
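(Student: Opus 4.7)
The plan is to follow the template of~\cite[Lem.~3.5]{KunRodWal21} adapted to the present pair of families. Since the two inequalities have identical structure---obtained by swapping $(U_M,\widetilde U_M,\lambda_1,\beta_{1,M})$ with $(W_S,\widetilde W_S,\lambda_2,\beta_{2,S})$---I would prove only the actuator bound and state that the sensor bound follows by the same argument. For $y\in V$, the starting point is the decomposition $y=q+r$ along $H=\widetilde\clU_M\oplus\clU_M^\perp$, with $q=P_{\widetilde\clU_M}^{\clU_M^\perp}y\in\widetilde\clU_M$ and $r=y-q\in\clU_M^\perp$. Membership of $r$ in $\clU_M^\perp$ is equivalent to $\int_{\omega^j}r\,\rmd\Omega=0$ for all $1\le j\le M_\sigma$, since $\clU_M$ is spanned by the indicator functions $\indf_{\omega^j}$.

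The key geometric ingredient is a Poincar\'e-type estimate on $\clU_M^\perp$: one seeks $\alpha_M>0$ with $\alpha_M\to\infty$ as $M\to\infty$ such that
\begin{equation*}
\norm{r}{H}^2\le\alpha_M^{-1}\norm{r}{V}^2\qquad\text{for all }r\in\clU_M^\perp.
\end{equation*}
The intuition is that the vanishing-mean constraint on each small subdomain $\omega^j$ forces oscillation, and as the chessboard is refined (the $M^d$ rescaled copies of the reference cell in Fig.~\ref{fig:suppActSensGrid}), the constraint becomes more stringent. Quantitatively, one first establishes the estimate on a reference cell by a spectral/compactness argument and then transfers it to arbitrary $M$ by the scaling $\sim 1/M$ of the subdomains, so that $\alpha_M$ grows like $M^2$; this is essentially the geometric input encoded in~\cite[Assum.~2.4]{Rod21-aut}.

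Given this, the rest is a bookkeeping computation. From $\norm{y}{H}^2\le 2\norm{q}{H}^2+2\norm{r}{H}^2$, the Poincar\'e estimate applied to $r$, and the triangle inequality $\norm{r}{V}\le\norm{y}{V}+\norm{q}{V}\le\norm{y}{V}+\beta_{1,M}\norm{q}{H}$ (the last bound using $q\in\widetilde\clU_M$ together with the definition of $\beta_{1,M}$), I obtain
\begin{equation*}
\zeta\norm{y}{H}^2\le 4\zeta\alpha_M^{-1}\norm{y}{V}^2+\zeta\bigl(2+4\alpha_M^{-1}\beta_{1,M}^2\bigr)\norm{q}{H}^2.
\end{equation*}
Choosing $M=\ovlineC{\zeta}$ so that $4\zeta\alpha_M^{-1}\le 1$ absorbs the first term into $\norm{y}{V}^2$, and then $\lambda_1=\ovlineC{\zeta,\beta_{1,M}}$ so that $2\lambda_1\ge\zeta(2+4\alpha_M^{-1}\beta_{1,M}^2)$ absorbs the second into $2\lambda_1\norm{q}{H}^2$, delivering the required inequality.

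The substantive difficulty is the Poincar\'e-type estimate on $\clU_M^\perp$ with an explicit rate $\alpha_M\to\infty$; this is the only step where the chessboard geometry of the supports enters essentially, and where the rescaling-and-copy construction must be exploited. Once it is in hand, everything else---oblique projections, triangle inequality, finite-dimensionality of $\widetilde\clU_M$ giving boundedness of $\beta_{1,M}$---is routine, and the sensor bound is obtained by the verbatim argument for $(W_S,\widetilde W_S)$.
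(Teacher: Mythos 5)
Your proposal is correct and follows essentially the same route as the paper, which establishes this lemma by invoking the argument of~\cite[Lem.~3.5]{KunRodWal21} together with the satisfiability of~\cite[Assum.~2.4 and~2.5]{Rod21-aut} for the chessboard-placed (regularized) indicator functions: namely the decomposition $y=P_{\widetilde\clU_M}^{\clU_M^\perp}y+r$ with $r\in\clU_M^\perp\cap V$, the Poincar\'e-type bound $\norm{r}{H}^2\le\alpha_M^{-1}\norm{r}{V}^2$ with $\alpha_M\to\infty$ coming from the rescaling-and-copy geometry, and the absorption of the two resulting terms by choosing first $M=\ovlineC{\zeta}$ and then $\lambda_1=\ovlineC{\zeta,\beta_{1,M}}$ (and verbatim for the sensor family). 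Your bookkeeping inequality and the resulting choices of $M$ and $\lambda_1$ match this scheme, so the only part left at sketch level (the divergence of $\alpha_M$) is exactly the part the paper also delegates to the cited references.
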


Furthermore, we have the analogue of~\cite[Cor.~3.1]{KunRodWal21} as well, as follows. 
\begin{lemma}\label{lem:actsens-orth}
For every constant $\zeta> 0$, we can find~$M\in\bbN_+$, $S\in\bbN_+$, $\lambda_1=\lambda_1(M)\in\overline\bbR_+$ and~$\lambda_2=\lambda_2(S)\in\overline\bbR_+$ large enough such that
\begin{align}
\norm{y}{V}^2+2\lambda_1\norm{P_{\clU_M}^{\clU_M^\perp}y}{H}^2\ge\zeta\norm{y}{H}^2\quad\mbox{and}\quad
\norm{y}{V}^2+2\lambda_2\norm{P_{\clW_S}^{\clW_S^\perp}y}{H}^2\ge\zeta\norm{y}{H}^2,\notag
\end{align}
for all~$y\in V$. Furthermore, $M=\ovlineC{\zeta}$, $S=\ovlineC{\zeta}$, $\lambda_1=\ovlineC{\zeta,p_M\beta_{1,M}}$, and~$\lambda_2=\ovlineC{\zeta,p_S\beta_{2,S}}$, with~$\beta_{1,M}$ and~$\beta_{2,S}$ as in Lemma~\ref{lem:actsens} and with~$p_M\coloneqq\norm{P_{\widetilde\clU_M}^{\clU_M^\perp}}{\clL(H,H)}$.
\end{lemma}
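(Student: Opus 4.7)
The plan is to derive Lemma~\ref{lem:actsens-orth} from Lemma~\ref{lem:actsens} by establishing the pointwise comparison
\begin{equation*}
\|P_{\widetilde\clU_M}^{\clU_M^\perp} y\|_H \le p_M\,\|P_{\clU_M}^{\clU_M^\perp} y\|_H \quad\text{for all } y\in H,
\end{equation*}
together with the analogous inequality for sensors (with $p_S\coloneqq\|P_{\widetilde\clW_S}^{\clW_S^\perp}\|_{\clL(H,H)}$). Combining this with the bound supplied by Lemma~\ref{lem:actsens} only costs a factor of $p_M^2$ in $\lambda_1$ (respectively $p_S^2$ in $\lambda_2$), which matches the announced dependence $\lambda_1=\ovlineC{\zeta,p_M\beta_{1,M}}$, $\lambda_2=\ovlineC{\zeta,p_S\beta_{2,S}}$.

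The key linear-algebraic observation is that the direct-sum decomposition $\widetilde\clU_M\oplus\clU_M^\perp=H$ forces $\widetilde\clU_M\cap\clU_M^\perp=\{0\}$. Consequently the restriction $T\coloneqq P_{\clU_M}^{\clU_M^\perp}|_{\widetilde\clU_M}\colon \widetilde\clU_M\to\clU_M$ has trivial kernel and, since the two subspaces share the same finite dimension $M_\sigma$, is a bijection. A direct check identifies its inverse: for every $u\in\clU_M$ we have $P_{\widetilde\clU_M}^{\clU_M^\perp}u\in\widetilde\clU_M$ and $u-P_{\widetilde\clU_M}^{\clU_M^\perp}u\in\clU_M^\perp$, so $T(P_{\widetilde\clU_M}^{\clU_M^\perp}u)=P_{\clU_M}^{\clU_M^\perp}(P_{\widetilde\clU_M}^{\clU_M^\perp}u)=P_{\clU_M}^{\clU_M^\perp}u=u$. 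Thus $T^{-1}=P_{\widetilde\clU_M}^{\clU_M^\perp}|_{\clU_M}$ and $\|T^{-1}\|_{\clL(\clU_M,\widetilde\clU_M)}\le p_M$, which yields $\|z\|_H\le p_M\|P_{\clU_M}^{\clU_M^\perp}z\|_H$ for every $z\in\widetilde\clU_M$. Applying this with $z\coloneqq P_{\widetilde\clU_M}^{\clU_M^\perp}y$ and noting that $y-z\in\clU_M^\perp$ implies $P_{\clU_M}^{\clU_M^\perp}z=P_{\clU_M}^{\clU_M^\perp}y$ gives the desired comparison.

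To finish, Lemma~\ref{lem:actsens} applied with the same $\zeta$ provides integers $M$, $S$ and parameters $\tilde\lambda_1=\ovlineC{\zeta,\beta_{1,M}}$, $\tilde\lambda_2=\ovlineC{\zeta,\beta_{2,S}}$ such that $\|y\|_V^2+2\tilde\lambda_1\|P_{\widetilde\clU_M}^{\clU_M^\perp}y\|_H^2\ge\zeta\|y\|_H^2$ and the sensor analogue. Setting $\lambda_1\coloneqq p_M^2\tilde\lambda_1$ and $\lambda_2\coloneqq p_S^2\tilde\lambda_2$ and inserting the pointwise comparison proven above immediately yields the two inequalities of the statement with the claimed dependence on $\zeta$, $p_M\beta_{1,M}$, $p_S\beta_{2,S}$. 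I expect the only genuinely nontrivial step to be the identification $T^{-1}=P_{\widetilde\clU_M}^{\clU_M^\perp}|_{\clU_M}$, since it is precisely what allows the norm of an otherwise abstract ``inverse'' map to be controlled by the norm $p_M$ of a projection already in hand; the remainder of the argument is bookkeeping.
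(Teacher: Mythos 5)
Your proposal is correct and follows essentially the route the paper intends (it states Lemma~\ref{lem:actsens-orth} as the analogue of \cite[Cor.~3.1]{KunRodWal21}): reduce to Lemma~\ref{lem:actsens} via the comparison $\norm{P_{\widetilde\clU_M}^{\clU_M^\perp}y}{H}\le p_M\norm{P_{\clU_M}^{\clU_M^\perp}y}{H}$ (and its sensor analogue) and then rescale the gains to $\lambda_1=p_M^2\tilde\lambda_1$, $\lambda_2=p_S^2\tilde\lambda_2$. The only stylistic difference is that you derive this comparison by identifying $T^{-1}=P_{\widetilde\clU_M}^{\clU_M^\perp}\rest{\clU_M}$, whereas it follows in one line from the composition identity $P_{\widetilde\clU_M}^{\clU_M^\perp}=P_{\widetilde\clU_M}^{\clU_M^\perp}P_{\clU_M}^{\clU_M^\perp}$ (the actuator analogue of~\eqref{ProjOb=ObOr}, which the paper records), a fact your own observation ``$y-z\in\clU_M^\perp$'' already contains.
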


The next result tells us that Assumption~\ref{ass:KMLS} is satisfied for actuators and sensors~\eqref{ActSens} with supports as in Fig.~\ref{fig:suppActSensGrid} and with the feedback input operators:
\begin{subequations}\label{KL-Oper}
\begin{align}
\mbox{either}\quad&K_M^{[\lambda_1]}\coloneqq -\lambda_1[\clV_U]^{-1}U_M^\vee P_{ \clU_M}^{\widetilde\clU_M^\perp}P_{\widetilde \clU_M}^{\clU_M^\perp}
\quad\mbox{or}\quad K_M^{[\lambda_1]}\coloneqq -\lambda_1[\clV_U]^{-1}U_M^\vee P_{ \clU_M}^{\clU_M^\perp};
\intertext{and with the output injection operators:}
\mbox{either}\quad&L_S^{[\lambda_2]}\coloneqq -\lambda_2P_{ \clW_S}^{\widetilde\clW_S^\perp} P_{\widetilde \clW_S}^{\clW_S^\perp}W_S^\diamond[\clV_W]^{-1}
\quad\mbox{or}\quad L_S^{[\lambda_2]}\coloneqq -\lambda_2W_S^\diamond[\clV_W]^{-1};
\end{align}
\end{subequations}
where~$[\clV_W] \in\bbR^{S_\varsigma\times S_\varsigma}$ is the matrix with entries~$[\clV_W]_{(i,j)}\coloneqq (\indf_{\omega_\rmm^j},\indf_{\omega_\rmm^i})_{H}$ in the $i$th row and~$j$th column; analogously, $[\clV_U] \in\bbR^{M_\sigma\times M_\sigma}$ is the matrix with entries~$[\clV_U]_{(i,j)}\coloneqq (\indf_{\omega^j},\indf_{\omega^i})_{H}$ in the $i$th row and~$j$th column. Observe also that (cf.~\cite[Sect.~1.2]{Rod21-aut})
\begin{align}\label{output-Proj}
U_M^\diamond [\clV_U]^{-1}U_M^\vee=P_{\clU_M}^{\clU_M^\perp}\quad\mbox{and}\quad W_S^\diamond [\clV_W]^{-1}W_S^\vee=P_{\clW_S}^{\clW_S^\perp}.
\end{align}
\begin{corollary}
Assumption~\ref{ass:KMLS} holds true for families of actuators and sensors as in Lemma~\ref{lem:actsens}, and the operators as in~\eqref{KL-Oper}.
\end{corollary}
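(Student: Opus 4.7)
The strategy is to reduce both halves of Assumption~\ref{ass:KMLS} to the coercivity estimates in Lemmas~\ref{lem:actsens} and~\ref{lem:actsens-orth}. Concretely, since~$\langle Aw,w\rangle_{V',V}=\|w\|_V^2$ by Assumption~\ref{ass:A}, it suffices to show that, for each of the two choices in~\eqref{KL-Oper},
\[
-2\langle U_M^\diamond K_M^{[\lambda_1]}w,w\rangle_{V',V} = 2\lambda_1\|\Pi_M w\|_H^2\quad\mbox{and}\quad -2\langle L_S^{[\lambda_2]}W_S^\vee w,w\rangle_{V',V} = 2\lambda_2\|\Pi_S w\|_H^2,
\]
with~$\Pi_M\in\{P_{\widetilde\clU_M}^{\clU_M^\perp},P_{\clU_M}^{\clU_M^\perp}\}$ and~$\Pi_S\in\{P_{\widetilde\clW_S}^{\clW_S^\perp},P_{\clW_S}^{\clW_S^\perp}\}$ matching the first/second form respectively; then applying Lemmas~\ref{lem:actsens}/\ref{lem:actsens-orth} with~$\zeta=\varrho$ will give
\[
2\langle (A-U_M^\diamond K_M^{[\lambda_1]})w,w\rangle_{V',V}\ge \|w\|_V^2 + (\|w\|_V^2 + 2\lambda_1\|\Pi_M w\|_H^2)\ge \|w\|_V^2+\varrho\|w\|_H^2,
\]
and analogously for the output-injection case.

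\textbf{Key algebraic step.} To identify the quadratic forms, I would substitute~\eqref{output-Proj} and exploit the standard adjoint relation between oblique projections, namely~$(P_{\clU_M}^{\widetilde\clU_M^\perp})^*=P_{\widetilde\clU_M}^{\clU_M^\perp}$ (and similarly with $\clW_S$ in place of~$\clU_M$), which follows because the $H$-adjoint of a bounded projection with range~$R$ and kernel~$N$ is the projection with range~$N^\perp$ and kernel~$R^\perp$. For the first form,~$U_M^\diamond K_M^{[\lambda_1]}=-\lambda_1 P_{\clU_M}^{\clU_M^\perp}P_{\clU_M}^{\widetilde\clU_M^\perp}P_{\widetilde\clU_M}^{\clU_M^\perp}$; since the range of~$P_{\clU_M}^{\widetilde\clU_M^\perp}$ lies in~$\clU_M$ where~$P_{\clU_M}^{\clU_M^\perp}$ is the identity, this collapses to~$-\lambda_1 P_{\clU_M}^{\widetilde\clU_M^\perp}P_{\widetilde\clU_M}^{\clU_M^\perp}$, and the adjoint identity then produces~$-\lambda_1\|P_{\widetilde\clU_M}^{\clU_M^\perp}w\|_H^2$. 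For the second form one simply uses~$U_M^\diamond K_M^{[\lambda_1]}=-\lambda_1 P_{\clU_M}^{\clU_M^\perp}$, which is self-adjoint, giving~$-\lambda_1\|P_{\clU_M}^{\clU_M^\perp}w\|_H^2$.

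\textbf{Output-injection side.} The same calculation runs for~$L_S^{[\lambda_2]}W_S^\vee$, with one extra reduction: for the first form, $L_S^{[\lambda_2]}W_S^\vee=-\lambda_2 P_{\clW_S}^{\widetilde\clW_S^\perp}P_{\widetilde\clW_S}^{\clW_S^\perp}P_{\clW_S}^{\clW_S^\perp}$, and the identity~$P_{\widetilde\clW_S}^{\clW_S^\perp}P_{\clW_S}^{\clW_S^\perp}=P_{\widetilde\clW_S}^{\clW_S^\perp}$ holds because~$P_{\widetilde\clW_S}^{\clW_S^\perp}$ kills the $\clW_S^\perp$-part of~$w$ that is discarded by~$P_{\clW_S}^{\clW_S^\perp}$; then the adjoint trick yields~$-\lambda_2\|P_{\widetilde\clW_S}^{\clW_S^\perp}w\|_H^2$. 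For the second form one has directly~$L_S^{[\lambda_2]}W_S^\vee=-\lambda_2 P_{\clW_S}^{\clW_S^\perp}$. Finally, invoking Lemma~\ref{lem:actsens} for the first (oblique) variants and Lemma~\ref{lem:actsens-orth} for the second (orthogonal) variants, with the quantitative choice~$\zeta=\varrho$, yields Assumption~\ref{ass:KMLS}\ref{ass:KMLS-a} and~\ref{ass:KMLS-b} with the promised thresholds~$M_*$, $S_*$, $\lambda_{1*}(M)$, $\lambda_{2*}(S)$.

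\textbf{Main obstacle.} The only delicate point is the bookkeeping of the oblique-projection compositions: one must be careful that~$P_{\clU_M}^{\widetilde\clU_M^\perp}$ is \emph{not} self-adjoint in general and that its adjoint is the companion projection~$P_{\widetilde\clU_M}^{\clU_M^\perp}$ (with range and kernel swapped through orthogonal complement), and one must correctly use the range/kernel containments to collapse the triple products. Once these two identifications are in place, the remaining steps are routine.
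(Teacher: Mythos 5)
Your proposal is correct and follows essentially the same route as the paper: reduce each half of Assumption~\ref{ass:KMLS} via~\eqref{output-Proj} (and, for the output-injection side, the identity~$P_{\widetilde\clW_S}^{\clW_S^\perp}=P_{\widetilde\clW_S}^{\clW_S^\perp}P_{\clW_S}^{\clW_S^\perp}$) to the quadratic forms~$\|w\|_V^2+2\lambda\|\Pi w\|_H^2$ and invoke Lemmas~\ref{lem:actsens} and~\ref{lem:actsens-orth} with~$\zeta=\varrho$. The only difference is that you make explicit the adjoint relation~$(P_{\clU_M}^{\widetilde\clU_M^\perp})^*=P_{\widetilde\clU_M}^{\clU_M^\perp}$ and the collapsing of the projection products, which the paper leaves implicit (citing the projection identities of the earlier reference), so the arguments are in substance identical.
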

\begin{proof}
Let~$\varrho>0$. By~\eqref{output-Proj}, for the feedback inputs we find, with~$\clF_M\in\{\clU_M,\widetilde \clU_M\}$,
\begin{align}\notag
2\langle (A -U_M^\diamond K_M^{[\lambda_1]})w,w\rangle_{V',V}=2\norm{y}{V}^2+2\lambda_1\norm{P_{\clF_M}^{\clU_M^\perp}y}{H}^2= \norm{y}{V}^2+\norm{y}{V}^2+2\lambda_1\norm{P_{\clF_M}^{\clU_M^\perp}y}{H}^2
\end{align}
and by Lemmas~\ref{lem:actsens} and~\ref{lem:actsens-orth} we find that Assumption~\ref{ass:KMLS}\ref{ass:KMLS-a} follows.
For the output injection, we can use the same arguments, simply recalling that (cf.~\cite[Eq.~(1.19a)]{Rod21-aut})
\begin{align}\label{ProjOb=ObOr}
 P_{\widetilde\clW_S}^{\clW_S^\perp}=P_{\widetilde\clW_S}^{\clW_S^\perp}P_{\clW_S}^{\clW_S^\perp}.
\end{align}
Therefore, we conclude that Assumption~\ref{ass:KMLS}\ref{ass:KMLS-b} holds true as well.
\end{proof}

\begin{remark}
We can take variations of the feedback-input and output-injection operators as~$K_M^{[\lambda_1,1]}= -\lambda_1P_{\clU_M}^{\widetilde\clU_M^\perp}A P_{\widetilde \clU_M}^{\clU_M^\perp}$ and~$L_S^{[\lambda_2,1]}\coloneqq -\lambda_2P_{ \clW_S}^{\widetilde\clW_S^\perp} AP_{\widetilde \clW_S}^{\clW_S^\perp}[\clV_W]^{-1}W_S^\vee$,  using the diffusion operator~$A$, which allow for a choice of the~$(\lambda_1,\lambda_2)$ independently of~$(M,S)$. This can be seen by following arguments as in~\cite[Proof of Thm.~4.2]{Rod21-aut}.  
\end{remark}

\subsection{Satisfiability of Assumption~\ref{ass:Xi}}\label{ssec:satisf-assXi}
We consider scalar kernels satisfying
\begin{subequations}\label{parab-reg-kernels}
\begin{align}
&\fkK\in\clC^2(0,\infty){\,\textstyle\bigcap\,}\rmL^1(0,1),\\\
&\fkK(t)\ge0\ge\dot\fkK(t),\qquad\ddot\fkK(t)\ge0,\quad\mbox{for all}\quad t>0.
\end{align}
\end{subequations}
Here,~$\fkK=0$ corresponds to the memoryless case. 
The memory operator is taken as
\begin{equation}\label{mem-gamma-Lap}
\clA_{\rmm}[y](\tau)\coloneqq\int_0^\tau\fkK(\tau-r)(-\Delta)  y(r)\,\rmd r,
\end{equation}
as in~\eqref{sys-y-Intro} with~$\gamma\in(0,1)$.  By direct computations,
\begin{align}\notag
&\int_{0}^t \langle \clA_\rmm[y](\tau),y(\tau)\rangle_{V',V}\,\rmd \tau=\int_0^t\left\langle\int_0^\tau\fkK(\tau-r) (-\Delta) y(r)\,\rmd r,y(\tau)\right\rangle_{V',V}\rmd\tau\notag\\
&\hspace{0em}=\int_0^t\left(\int_0^\tau\fkK(\tau-r) \left\langle-\Delta y(r),y(\tau)\right\rangle_{V',V}\,\rmd r\right)\rmd\tau\notag\\
&\hspace{0em}=\int_0^t\left(\int_0^\tau\fkK(\tau-r) \left(\nabla y(r),\nabla y(\tau)\right)_{H^d}\,\rmd r\right)\rmd\tau
\notag\\ &\hspace{0em}=\sum_{i=1}^d\int_\Omega\int_0^t\int_0^\tau\fkK(\tau-r) \tfrac{\p}{\p x_i} y(x,r)\tfrac{\p}{\p x_i} y(x,\tau)\,\rmd r~\rmd\tau~\rmd x.\label{monot-xiLap}
\end{align}
Recall that~$\fkK$ as in~\eqref{mem-gamma-Lap}, defines a  positive kernel~\cite[Cor.~2.2 and Eq.~(1.1]{NohelShea76}, that is,
\begin{align}\label{posKern_powgamma-1}
&\int_0^tf(\tau)\int_0^\tau\fkK(\tau-r) f(r) \,\rmd r\,\rmd\tau
\ge  0,\quad\mbox{for all } t>0 \mbox{ and all } f\in \rmL^2(0,t),
\end{align}
which combined with~\eqref{monot-xiLap}, shows that Assumption~\ref{ass:Xi} is satisfied.

\begin{remark}\label{rem:constker}
The result in~\cite[Cor.~2.2]{NohelShea76} is stated for nonconstant~$\fkK$, and is intended to the more restrictive class of ``strongly'' positive kernels. By direct calculations, we find that a  constant kernel~$\fkK(t)=\kappa\ge0$ is a positive kernel as well:
\[
2\int_0^tf(\tau)\int_0^\tau \fkK(\tau-r) \,\rmd r\,\rmd\tau=\kappa\int_0^t\tfrac{\rmd}{\rmd \tau}\left(\int_0^\tau f(r) \,\rmd r\right)^2\rmd\tau=\kappa\left(\int_0^t f(r) \,\rmd r\right)^2\rmd\tau
\ge  0.
\]
\end{remark}

\section{Numerical Results}\label{sec:numresults}
In Section~\ref{sec:satisf-assum} we have seen that the abstract assumptions are satisfied for scalar parabolic systems as~\eqref{sys-y-Intro}, with scalar kernels~$\fkK$ as in~\eqref{parab-reg-kernels}, and with feedback input and output injection operators as in~\eqref{KL-Oper} with indicator functions actuators and sensors as in Fig.~\ref{fig:suppActSensGrid}.
Here, we present the results of simulations concerning the output-based feedback stabilization of a system as~\eqref{sys-y-Intro} for kernels as~\eqref{wsk-intro} and~\eqref{exk-intro};  these kernels satisfy~\eqref{parab-reg-kernels}.
We consider the system
\begin{subequations}\label{sys-num}
\begin{align}
\tfrac{\partial}{\partial t} {y}&=-(-\nu\Delta+\Id)y  -  a y - b \cdot\nabla y  -\eta \int_0^t\fkK(t-s)(-\Delta)y(s)ds\notag\\
&\quad +U_M^\diamond \Bigl(- \lambda_1[\clV_U]^{-1}U_M^\vee P_{\clU_M}^{\widetilde\clU_M^\perp}P_{\widetilde\clU_M}^{\clU_M^\perp}\widehat y\Bigr),\label{sys-num-y}\\
\tfrac{\partial}{\partial t} {\widehat y}&=-(-\nu\Delta+\Id)\widehat y  -  a  \widehat y - b \cdot\nabla \widehat y  -\eta \int_0^t\fkK(t-s)(-\Delta)\widehat y(s)ds \notag\\
&\quad+U_M^\diamond \Bigl(- \lambda_1[\clV_U]^{-1}U_M^\vee P_{\clU_M}^{\widetilde\clU_M^\perp}P_{\widetilde\clU_M}^{\clU_M^\perp}\widehat y\Bigr) - \lambda_2 P_{\clW_S}^{\widetilde\clW_S^\perp}P_{\widetilde \clW_S}^{\clW_S^\perp}W_S^\diamond [\clV_W]^{-1} W_S^\vee(\widehat y-y).\label{sys-num-haty}
\end{align}
\end{subequations}	
The dynamics~\eqref{sys-num-y} of the controlled state~$y$ is subject to the feedback input control
\begin{align}
u(t)\coloneqq- \lambda_1[\clV_U]^{-1}U_M^\vee P_{\clU_M}^{\widetilde\clU_M^\perp}P_{\widetilde\clU_M}^{\clU_M^\perp}\widehat y(t)\notag
\end{align}
depending on the state estimate~$\widehat y(t)$ at time~$t$, provided by the Luenberger observer~\eqref{sys-num-haty}, with the forcing
\begin{align}
\clI(t)\coloneqq- \lambda_2 P_{\clW_S}^{\widetilde\clW_S^\perp}P_{\widetilde \clW_S}^{\clW_S^\perp}W_S^\diamond [\clV_W]^{-1} W_S^\vee(\widehat y-y),\notag
\end{align}
injected  into the dynamics of the observer, depending on the measured output~$W_S^\vee y$.

From the result established in Theorem \ref{thm:stab}, the solution of~\eqref{sys-num}
satisfies
\begin{align}
&\norm{(y,\widehat y-y)(t)}{H\times H} \le C \mathrm{e}^{-\mu (t-s)}\left(\norm{(y,\widehat y-y)(s)}{H\times H}+(D_{\rmm,s}^{y,\widehat y-y})^\frac12\right),\label{exp-dec-num}
\end{align}
for all~$t>s\ge0$, provided we choose~$M,S,\lambda_1(M),\lambda_2(S)$ large enough, with the constant~$D_{\rmm,s}^{y,\widehat y-y}$ depending on~$s$; see~\eqref{goal-exp-CLerror-Intro}.

\begin{remark} Recalling again~\eqref{output-Proj} and~\eqref{ProjOb=ObOr}, we can rewrite, shortly,
\begin{align}
U_M^\diamond u(t)&=- \lambda_1P_{\clU_M}^{\widetilde\clU_M^\perp}P_{\widetilde\clU_M}^{\clU_M^\perp}\widehat y(t);\qquad
\clI(t)=- \lambda_2 P_{\clW_S}^{\widetilde\clW_S^\perp}P_{\widetilde \clW_S}^{\clW_S^\perp}(\widehat y-y).\notag
\end{align}
\end{remark}

\subsection{Parameters}
For the simulations, the spatial domain is taken as the unit square and we consider the system subject to Neumann boundary conditions,
\begin{align}\label{num-OmegNeu}
\Omega= (0,1)\times(0,1)\subset\bbR^2;\qquad  (\bfn \cdot\nabla y)\rest{\p\Omega}=0= (\bfn \cdot\nabla \widehat y)\rest{\p\Omega}
\end{align}
Further, we have chosen
\begin{subequations}\label{num-param}
\begin{align}
&y_0 = 1-2x_1x_2;\quad&&\widehat{y}_0 = P_{\clW_S}^{\clW_S^\perp}y_0;\label{num-param-ic}\\
&a(x_1,x_2,t) =  -1.5+x_1-|(\sin(6t+x_1)|);\quad &&\nu=0.1; \\
&b(x_1,x_2,t) = (x_1 + x_2 ,|\cos(6t)x_1x_2|).\quad&&
\end{align}
\end{subequations}
For kernels as~\eqref{exk-intro} we shall take~$\gamma=1$, and for~\eqref{wsk-intro} we take~$\gamma=0.5$.  Finally, we are going to consider several values for the memory coefficient~$\eta$. 

In~\eqref{num-param-ic}, we propose to choose~$\widehat{y}_0 = P_{\clW_S}^{\clW_S^\perp}y_0$ as the guess for the initial state. Note that such~$\widehat{y}_0$ can be constructed from the available output~$w(0)=W_S^\vee y_0$ at time~$t=0$, because recalling~\eqref{output-Proj} we have that~$\widehat{y}_0=W_S^\diamond [\clV_W]^{-1}w(0)$.

By Lemmas~\ref{lem:actsens} and~\ref{lem:actsens-orth} we can take as auxiliary functions either ``regularized indicator functions'' as in~\cite[Sect.~6]{Rod21-aut} or simply the actuators and sensors themselves. For simplicity, we shall take the latter, that is, $\widetilde U_M=U_M$ and~$\widetilde W_S=W_S$; in other words, we shall take the feedback input and output injection operators as follows
\begin{align}\label{KL-num-1}
K_M^{[\lambda_1]}= - \lambda_1[\clV_U]^{-1}U_M^\vee P_{\clU_M}^{\clU_M^\perp},\qquad L_S^{[\lambda_2]}= - \lambda_2 W_S^\diamond [\clV_W]^{-1}.
\end{align}
We shall take the actuators and sensors as in Fig.~\ref{fig:suppActSensGrid}, corresponding to
\begin{equation}\label{MS-num}
(M,S)\in\{(1,1),(2,2),(3,3)\}.
\end{equation} 
In this way, the dynamics of the coupled system~\eqref{sys-num} becomes
\begin{subequations}\label{sys-num-2}
\begin{align}
&\tfrac{\partial}{\partial t}y=-Ay  -  a y - b \cdot\nabla y  -\eta \clA_\rmm[y] +U_M^\diamond K_M^{[\lambda_1]}\widehat y,\label{sys-num-y-2}\\
&\tfrac{\partial}{\partial t}{\widehat y}=-A\widehat y  -  a\widehat y - b \cdot\nabla \widehat y  - \eta\clA_\rmm[\widehat y]+U_M^\diamond K_M^{[\lambda_1]}\widehat y+L_S^{[\lambda_2]}(W_S^\vee\widehat y-W_S^\vee y),\label{sys-num-haty-2}\\
\mbox{with}&\quad K_M^{[\lambda_1]}= - \lambda_1[\clV_U]^{-1}U_M^\vee,\qquad L_S^{[\lambda_2]}= - \lambda_2 W_S^\diamond [\clV_W]^{-1},\label{sys-num-KL-2}\\
&\quad A=-\nu\Delta+\Id,\quad\mbox{and}\quad \clA_\rmm[h]\coloneqq\int_0^t\fkK(t-s)(-\Delta)h(s)ds,\label{sys-num-AAxi}
\end{align}
\end{subequations}
Note that~\eqref{KL-num-1} simplifies into~\eqref{sys-num-KL-2}. Here~$A=-\nu\Delta+\Id$ is the (shifted scaled) Neumann Laplacian with domain~$\rmD(A)=\{h\in W^{2,2}(\Omega)\mid (\bfn\cdot\nabla h)\rest{\partial\Omega}=0\}$; see~\eqref{num-OmegNeu}.

\begin{remark}
Though we know that the closed-loop system is stable, the performance of the output-based feedback-input will depend on the initial guess. If~$\widehat{y}_0$ is far from~${y}_0$, then the initial input~$K_M\widehat{y}_0$ is far from the desired~$K_M\widehat{y}_0$, and to guarantee that~$K_M\widehat{y}_0$ will have the desired decreasing effect on the norm of the controlled state~$y$, we need to wait until the estimate~$\widehat{y}(t)$ provided by the observer approaches~${y}(t)$ well enough. The choice~$\widehat{y}_0 = P_{\clW_S}^{\clW_S^\perp}y_0$, in~\eqref{num-param-ic}, explores the available information we have on the initial state~${y}_0$ trhough the output~$w(0)=W_S^\vee y_0$.
\end{remark}

\subsection{Discretization}
To compute the solution pair~$(y(t),\widehat y(t))$, for the spatial variable we consider a triangulation of the spatial domain and use a finite element discretization based on  piecewise linear basis functions (hat functions); for the temporal variable we use an implicit-explicit Crank--Nicolson/Adams--Bashforth scheme.

\subsubsection{Finite element spatial discretization}
We follow~\cite[Sect.~3.1]{Rod23-eect}. 
Let~$\bfS$ and~$\bfM$ be the stiffness and mass matrices and denote~$\bfS_{\nu}=\nu\bfS+\bfM$.
 Let~$\bfG_{x_i}$ be the discretizations of the directional derivatives~$\frac{\p}{\p x_i}$ and let~$\bfD_{\underline v}$ be the diagonal matrix, the entries of which
are those of the vector~$\underline v\in\bbR^{N\times 1}$, ~$(\bfD_{\underline v})_{(n,n)}=\underline v_{(n,1)}$.
Spatial discretization of~\eqref{sys-num-y-2} leads to
\begin{align}\label{sys-y-D1}
 &\bfM\dot{\underline y} =-\bfS_\nu \underline  y-\tfrac{\bfM \bfD_{\underline a}
 +\bfD_{\underline a}\bfM}{2}\underline y
 -{\textstyle\sum\limits_{i=1}^d}(\bfD_{\underline b_i}\bfG_{x_i})\underline  y
 -\eta\fkA_\rmm[\underline y]+\bfM\bfU\bfK\widehat{\underline y},
\end{align}
where~$\underline y(t)\in\bbR^{N\times 1}$ is the vector of values of the state at the spatial mesh (triangulation) points at time~$t\ge0$, and
\begin{equation}\label{bfU}
\bfU\coloneqq[\underline\indf_{\omega^1}\dots \underline\indf_{\omega^{M_\sigma}}]\in\bbR^{N\times M_\sigma}
\end{equation}
is the matrix the columns of which contain the finite-elements vectors corresponding to the actuators. Finally, $\bfu(t)=\bfK\underline y(t)\in\bbR^{M_\sigma\times 1}$ is the computed input feedback control~$u(t)=K_M^{[\lambda_1]} y(t)$, as follows
\begin{equation}\label{bfK}
\bfK\coloneqq -\lambda_1\bfV_U^{-1}\bfU^\top\bfM\quad\mbox{where}\quad\bfV_U\quad\mbox{has entries}\quad{\bfV_U}_{(i,j)}\coloneqq \underline\indf_{\omega^j}^\top\bfM\underline\indf_{\omega^i},
\end{equation}
in the $i$th row and~$j$th column; recall that~$[\clV_U]$, in~\eqref{KL-Oper}, has entries~$ (\indf_{\omega^j},\indf_{\omega^i})_{H}$.

Finally, for the spatial discretization~$\fkA_\rmm[\underline y]$ of the memory term (not considered in~\cite[Sect.~3.1]{Rod23-eect}) is taken as follows
\begin{equation}\label{fkAxi}
\fkA_\rmm[\underline y](t)=\int_0^t(t-s)^{\gamma-1}\bfS\underline y(s)\,\rmd s.
\end{equation}

Denoting, by simplicity
\begin{align}\notag
 \bfR(t)\!\coloneqq\tfrac{\bfM \bfD_{\underline a(t)}+\bfD_{\underline a(t)}\bfM}{2},
 \quad\!\! \bfC(t)\!\coloneqq\!
 {\textstyle\sum\limits_{i=1}^d}(\bfD_{\underline b_i(t)}\bfG_{x_i}),
 \quad\!\! \bfF^{y,\widehat y}(t)\!\coloneqq\!-\eta\fkA_\rmm[\underline y](t)+\bfM\bfU\bfK\widehat{\underline y}(t),
\end{align}
we arrive at the semi-discretization
\begin{align}\notag
 &\bfM\dot{\underline y} =-\bfS_\nu \underline  y-\bfR\underline y-\bfC\underline y
+\bfF^{y,\widehat y}.
\end{align}

\subsubsection{Crank--Nicolson/Adams--Bashforth temporal discretization}

We consider a regular temporal mesh, with
a fixed time-step~$k>0$, that is, we consider the discrete times instants~$t_j\coloneqq k(j-1)$, $j\in\bbN_+$.
Again, for simplicity, we denote
\begin{align}\notag
 \underline y_j\coloneqq  \underline y(t_j),\qquad \bfR_j\coloneqq \bfR(t_j),
 \qquad \bfC_j\coloneqq\bfC(t_j),
 \qquad \bfF^{y,\widehat y}_j\coloneqq\bfF^{y,\widehat y}(t_j).
\end{align}

Following~\cite[Sect.~3.1]{Rod23-eect}, we take an implicit Crank--Nicolson scheme for the sparse symmetric matrix~$-\bfS_\nu -\bfR$ combined with an explicit Adams--Bashforth extrapolation for the remaining~$-\bfC\underline y+\bfF_y$. This leads us to the implicit-explicit (IMEX) scheme as follows (cf.\cite[Eq.~(3.5)]{Rod23-eect}) 
\begin{align}
 &(2\bfM +k\bfS_\nu +k\bfR_{j+1}){\underline y}_{j+1}=h_j,\quad j\in\bbN_+,\quad\mbox{with}\label{sys-y-D2}\\
 &h_j\coloneqq(2\bfM -k\bfS_\nu -k\bfR_j){\underline y}_{j}
  -k(3\bfC_{j} {\underline y}_{j}-\bfC_{j-1} {\underline y}_{j-1})
  +k(3\bfF^{y,\widehat y}_{j}-\bfF^{y,\widehat y}_{j-1}),\notag
\end{align}
which we can solve to obtain~${\underline y}_{j+1}$, provided we know~$({\underline y}_{j-1},{\underline y}_{j})$.

Note that~${\underline y}_{1}=\underline{y_0}$, at initial
time~$t=t_1=0$, will be given by the initial state~$y_0$ in~\eqref{num-param-ic}. However,
to start the scheme, in order to obtain~${\underline y}_{2}$ at time~$t=k$, we need the
``ghost'' state~${\underline y}_{0}$ ``at ghost time~$t=t_0=-k$''. As in~\cite[Sect.~3.1]{Rod23-eect}, we have set/chosen~${\underline y}_{0}\coloneqq\underline{y_0}$.

Finally, we observe that
\begin{align}
\bfF^{y,\widehat y}_{j-1}=-\eta\fkA_\rmm[\underline y](t_{j-1})+\bfM\bfU\bfK\widehat{\underline y}_{j-1},
\end{align}
again for~$j=1$ we take the ghost~$\bfF^{y,\widehat y}_{0}\coloneqq\bfF^{y,\widehat y}_{1}=-\eta\fkA_\rmm[\underline y](0)+\bfM\bfU\bfK\widehat{\underline y}(0)=\bfM\bfU\bfK\underline{\widehat{y_0}}$.

Combining~\eqref{sys-y-D2} with the above ghosts we arrive at
\begin{align}
&\bfX_{j+1}^+{\underline y}_{j+1}= \bfX_{j}^-{\underline y}_{j}
-k\bfE^b_j-k\bfE^\rmm_j+k\widehat\bfE^U_j,\qquad  j\in\bbN_+,\label{sys-y-discfull0}\\
\mbox{where}\quad&{\underline y}_{0}={\underline y}_{1}\coloneqq\underline{y_0};\qquad\fkA_\rmm[\underline y](t_0)\coloneqq\fkA_\rmm[\underline y](t_1)=\fkA_\rmm[\underline y](0)=0;\notag\\
&\bfX_{j+1}^+\coloneqq2\bfM +k\bfS_\nu +k\bfR_{j+1};\quad\bfX_{j}^-
\coloneqq2\bfM -k\bfS_\nu -k\bfR_j;\notag\\
&\bfE^b_j\coloneqq3\bfC_{j} {\underline y}_{j}-\bfC_{j-1} {\underline y}_{j-1};\qquad
\bfE^\rmm_j\coloneqq3\eta\fkA_\rmm[\underline y](t_{j})-\eta\fkA_\rmm[\underline y](t_{j-1});\notag\\
&\widehat\bfE^U_j\coloneqq3\bfM\bfU\bfK\widehat{\underline y}_{j}-\bfM\bfU\bfK\widehat{\underline y}_{j-1} .\notag
\end{align}

Analogously, for the observer~\eqref{sys-num-KL-2} we take the discretization
\begin{align}
&\bfX_{j+1}^+{\widehat{\underline y}}_{j+1}= \bfX_{j}^-{\widehat{\underline y}}_{j}
  -k\widehat\bfE^b_j-k\widehat\bfE^\xi_j+k\widehat\bfE^U_j+k\widehat\bfE^W_j,\qquad  j\in\bbN_+,\label{sys-haty-discfull0}\\
\mbox{where}\quad
   &\widehat{\underline y}_{0}=\widehat{\underline y}_{1}\coloneqq\underline{\widehat y_0}\qquad\fkA_\rmm[\widehat{\underline y}](t_0)\coloneqq\fkA_\rmm[\widehat{\underline y}](t_1)=\fkA_\rmm[\widehat{\underline y}](0)=0;\notag\\
  &\widehat\bfE^b_j\coloneqq3\bfC_{j} \widehat{\underline y}_{j}-\bfC_{j-1} \widehat{\underline y}_{j-1};\qquad
  \widehat\bfE^\rmm_j\coloneqq3\eta\fkA_\rmm[\widehat{\underline y}](t_{j})-\eta\fkA_\rmm[\widehat{\underline y}](t_{j-1});\notag\\
  &\widehat\bfE^W_j\coloneqq 3\bfL(\bfW^\top\bfM\widehat{\underline y}_{j}-\bfW^\top\bfM{\underline y}_{j})-\bfL(\bfW^\top\bfM\widehat{\underline y}_{j-1}-\bfW^\top\bfM{\underline y}_{j-1}).\notag
\end{align}
with the output injection matrix
\begin{equation}\label{bfL}
\bfL\coloneqq -\lambda_2\bfM\bfW\bfV_W^{-1},\quad\mbox{where}\quad\bfV_W\quad\mbox{has entries}\quad{\bfV_W}_{(i,j)}\coloneqq \underline\indf_{\omega_{\rmm}^j}^\top\bfM\underline\indf_{\omega_{\rmm}^i},
\end{equation}
and~$\bfW$ is the matrix with columns given the the vectors corresponding to the sensors,
\begin{equation}\label{bfW}
\bfW\coloneqq[\underline\indf_{\omega_{\rmm}^1}\dots \underline\indf_{\omega_{\rmm}^{S_\varsigma}}]\in\bbR^{N\times S_\varsigma}.
\end{equation}

\subsubsection{The memory terms}
To obtain a fully discretized system, we specify the discretization of the memory terms~$\fkA_\rmm[\underline y](t_{j})$ and~$\fkA_\rmm[\widehat{\underline y}](t_{j})$, for~$j\ge 2$. We follow a variation of the discretization in~\cite[Sect.3.2]{MahajanKhanMohan24}. By~\eqref{fkAxi} and direct computations,
\begin{align}
\fkA_\rmm[\underline y](t_{j})&=\int_0^{t_j}\fkK(t_j-s)\bfS\underline y(s)\,\rmd s={\sum_{i=1}^{j-1}}\int_{t_{i}}^{t_{i+1}}\fkK(t_j-s)\bfS\underline y(s)\,\rmd s,\qquad j\ge2.\label{fkAxi-j-1}
\end{align}
Now, in the time interval~$[t_{i},t_{i+1}]$ we take the affine aproximation
\begin{align}
\underline y^*(s)&\coloneqq\underline y(t_{i})+k^{-1}(s-t_{i})({\underline y}(t_{i+1})-{\underline y}(t_{i})),\notag
\end{align}
satisfying~$\underline y^*(t_{i})=\underline y(t_{i})$ and~$\underline y^*(t_{i+1})=\underline y(t_{i+1})$.
It follows that
\begin{align}
&\int_{t_{i}}^{t_{i+1}}\!\!\fkK(t_j-s)\bfS\underline y(s)\,\rmd s\approx\int_{t_{i}}^{t_{i+1}}\!\!\fkK(t_j-s)\bfS\underline y^*(s)\,\rmd s\label{fkAxi-j-2}\\
&\hspace{-0em}=\int_{t_{i}}^{t_{i+1}}\!\!\fkK(t_j-s)\,\rmd s\bfS\underline y(t_{i})+k^{-1}\int_{t_{i}}^{t_{i+1}}\!\!(s-t_{i})\fkK(t_j-s)\,\rmd s\bfS({\underline y}(t_{i+1})-{\underline y}(t_{i})).\notag
\end{align}

Note that for the considered kernels, as in~\eqref{wsk-intro} and~\eqref{exk-intro}, we can compute the integrals~$\int_{t_{i}}^{t_{i+1}}\fkK(t_j-s)\,\rmd s$ and~$\int_{t_{i}}^{t_{i+1}}(s-t_{i}) {\fkK(t_j-s)}\,\rmd s$ exactly, as follows.

\noindent
$\bullet$\quad\underline{The case $\fkK(t)=\rme^{-\gamma t}$: We find}
\begin{align}
\clI_0^{\rm exk}&\coloneqq\int_{t_{i}}^{t_{i+1}}\rme^{-\gamma(t_j-s)}\,\rmd s =\left[\gamma^{-1}\rme^{-\gamma(t_j-s)}\right]_{t_{i}}^{t_{i+1}}=\gamma^{-1}\Bigl(\rme^{-\gamma(t_j-t_{i+1})}-\rme^{-\gamma(t_j-t_{i})}\Bigr)\notag\\
&=\gamma^{-1}\rme^{\gamma k(i-j)}\Bigl(\rme^{\gamma k}-1\Bigr),\label{int-o0sm}
\intertext{and}
\clI_1^{\rm exk}&\coloneqq\int_{t_{i}}^{t_{i+1}}(s-t_{i})\rme^{-\gamma(t_j-s)}\,\rmd s=\int_{0}^{k}r\rme^{-\gamma(t_j-r-t_{i})}\,\rmd r\notag\\
&=\left[\gamma^{-1}\rme^{-\gamma(t_j-t_{i}-r)}r\right]_{0}^{k}-\gamma^{-1}\int_{0}^{k}\rme^{-\gamma(t_j-t_{i}-r)}\,\rmd r\notag\\
&=\gamma^{-1}\rme^{-\gamma(t_j-t_{i}-k)}k+\left[-\gamma^{-2}\rme^{-\gamma(t_j-t_{i}-r)}\right]_{0}^{k}\notag
\intertext{and, by expanding the last sum,}
\clI_1^{\rm exk}&=\gamma^{-1}\rme^{\gamma k(1+i-j)}k-\gamma^{-2}\left(\rme^{-\gamma(t_j-t_{i}-k)}-\rme^{-\gamma(t_j-t_{i})}\right)\notag\\
&=\gamma^{-1}\rme^{\gamma k(1+i-j)}k-\gamma^{-2}\left(\rme^{\gamma k(1+i-j)}-\rme^{\gamma k(i-j)}\right).\label{int-o1sm}
\end{align}
Then, by~\eqref{fkAxi-j-1}, \eqref{fkAxi-j-2}, \eqref{int-o0sm}, and~\eqref{int-o1sm},  we obtain
\begin{subequations}\label{fkAxi-j.sm}
\begin{align}
&\fkA_\rmm[\underline y](t_{j})\approx\overline\fkA_\rmm[{\underline y}](t_{j})
\coloneqq\bfS{\sum_{i=1}^{j-1}}\Bigl(\Gamma_{1,j,i}^{\rm sm}\underline y(t_{i})+\Gamma_{2,j,i}^{\rm sm}({\underline y}(t_{i+1})-{\underline y}(t_{i}))\Bigr),\quad\mbox{with}\\
&\Gamma_{1,j,i}^{\rm sm}\coloneqq\gamma^{-1}\rme^{\gamma k(i-j)}\Bigl(\rme^{\gamma k}-1\Bigr);\\
&\Gamma_{2,j,i}^{\rm sm}\coloneqq\gamma^{-2}\rme^{\gamma k(i-j)}\Bigl(\gamma\rme^{\gamma k}-k^{-1}\left(\rme^{\gamma k}-1\right)\Bigr).\hspace{-.5em}
\end{align}
Analogously we take the discretization of the memory term in the observer as
\begin{align}
\fkA_\rmm[\widehat{\underline y}](t_{j})\approx\overline\fkA_\rmm[\widehat{\underline y}](t_{j})&\coloneqq \bfS{\sum_{i=1}^{j-1}}\Bigl(\Gamma_{1,j,i}^{\rm sm}\widehat{\underline y}(t_{i})+\Gamma_{2,j,i}^{\rm sm}(\widehat{\underline y}(t_{i+1})-\widehat{\underline y}(t_{i}))\Bigr).
\end{align}
\end{subequations}

\noindent
 {$\bullet$\quad\underline{The case $\fkK(t)=t^{\gamma-1}$: Now, we find}}
\begin{align}
\clI_0^{\rm wsk}&\coloneqq\!\int_{t_{i}}^{t_{i+1}}\!\!\!\!\!(t_j\!-\!s)^{\gamma-1}\,\rmd s\! =\! \left[-\gamma^{-1} (t_j\!-\!s)^{\gamma}\right]_{t_{i}}^{t_{i+1}}\!=\!\gamma^{-1}\Bigl((t_j\!-\!t_{i})^{\gamma}-(t_j\!-\!t_{i+1})^{\gamma}\Bigr)\notag\\
&=\gamma^{-1}k^{\gamma}\Bigl((j-i)^{\gamma}-(j-i-1)^{\gamma}\Bigr),\label{int-o0ws}\\
\mbox{and}\quad
\clI_1^{\rm wsk}&\coloneqq\int_{t_{i}}^{t_{i+1}}(s-t_{i})(t_j-s)^{\gamma-1}\,\rmd s=\int_{0}^{k}r(t_j-r-t_{i})^{\gamma-1}\,\rmd r\notag\\
&=\left[-\gamma^{-1}(t_j-t_{i}-r)^{\gamma}r\right]_{0}^{k}+\gamma^{-1}\int_{0}^{k}(t_j-t_{i}-r)^{\gamma}\,\rmd r\notag\\
&=-\gamma^{-1}(t_j-t_{i}-k)^{\gamma}k+\left[-\gamma^{-1}(\gamma+1)^{-1}(t_j-t_{i}-r)^{\gamma+1}\right]_{0}^{k}\notag
\intertext{and, by further direct computations,}
\clI_1^{\rm wsk}&=-\gamma^{-1}(t_j-t_{i}-k)^{\gamma}k\notag\\
&\quad-\gamma^{-1}(\gamma+1)^{-1}(t_j-t_{i}-k)^{\gamma+1}+\gamma^{-1}(\gamma+1)^{-1}(t_j-t_{i})^{\gamma+1}\notag\\
&=-\gamma^{-1}k^{\gamma+1}\Bigl((j-i-1)^{\gamma}+(\gamma+1)^{-1}(j-i-1)^{\gamma+1}\Bigr)\notag\\
&\quad+\gamma^{-1}(\gamma+1)^{-1}(j-i)^{\gamma+1}k^{\gamma+1}\notag\\
&=\gamma^{-1}(\gamma+1)^{-1}k^{\gamma+1}\Bigl((j-i)^{\gamma+1}-(j-i-1)^{\gamma+1}-(\gamma+1)(j-i-1)^{\gamma}\Bigr).\!\!\label{int-o1ws}
\end{align}
Then, by~\eqref{fkAxi-j-1}, \eqref{fkAxi-j-2}, \eqref{int-o0ws}, and~\eqref{int-o1ws},  we obtain
\begin{subequations}\label{fkAxi-j.ws}
\begin{align}
&\fkA_\rmm[\underline y](t_{j})\approx\overline\fkA_\rmm[{\underline y}](t_{j})
\coloneqq\bfS{\sum_{i=1}^{j-1}}\Bigl(\Gamma_{1,j,i}^{\rm ws}\underline y(t_{i})+\Gamma_{2,j,i}^{\rm ws}({\underline y}(t_{i+1})-{\underline y}(t_{i}))\Bigr),\quad\mbox{with}\\
&\Gamma_{1,j,i}^{\rm ws}\coloneqq\gamma^{-1}k^{\gamma}\Bigl((j-i)^{\gamma}-(j-i-1)^{\gamma}\Bigr);\\
&\Gamma_{2,j,i}^{\rm ws}\coloneqq\gamma^{-1}(\gamma+1)^{-1}k^{\gamma}\Bigl((j-i)^{\gamma+1}-(j-i-1)^{\gamma+1}-(\gamma+1)(j-i-1)^{\gamma}\Bigr).\hspace{-3em}
\end{align}
We take the analogue discretization for memory term in the observer, namely,
\begin{align}
\fkA_\rmm[\widehat{\underline y}](t_{j})\approx\overline\fkA_\rmm[\widehat{\underline y}](t_{j})&\coloneqq \bfS{\sum_{i=1}^{j-1}}\Bigl(\Gamma_{1,j,i}^{\rm ws}\widehat{\underline y}(t_{i})+\Gamma_{2,j,i}^{\rm ws}(\widehat{\underline y}(t_{i+1})-\widehat{\underline y}(t_{i}))\Bigr).
\end{align}
\end{subequations}

\subsubsection{The fully discrete system}
Recalling~\eqref{sys-y-discfull0} and~\eqref{sys-haty-discfull0},
by~\eqref{fkAxi-j.ws}/\eqref{fkAxi-j.sm}  the fully discretized system~\eqref{sys-num-2} reads,
\begin{align}\label{sys-num-fullydiscrete}
&\left\{\begin{array}{l}\bfX_{j+1}^+{\underline y}_{j+1}= \bfX_{j}^-{\underline y}_{j}
  -k\bfE^b_j-k\overline\bfE^\rmm_j+k\widehat\bfE^U_j,\\
  \bfX_{j+1}^+{\widehat{\underline y}}_{j+1}= \bfX_{j}^-{\widehat{\underline y}}_{j}
  -k\widehat\bfE^b_j-k\overline{\widehat\bfE}^\rmm_j+k\widehat\bfE^U_j+k\widehat\bfE^W_j,
  \end{array}\right.\qquad  j\in\bbN_+,
  \intertext{where}
    &\bfX_{j+1}^+\coloneqq2\bfM +k\bfS_\nu +k\bfR_{j+1};\quad\bfX_{j}^-
\coloneqq2\bfM -k\bfS_\nu -k\bfR_j;\notag\\
  &{\underline y}_{0}={\underline y}_{1}\coloneqq\underline{y_0};\qquad\widehat{\underline y}_{0}=\widehat{\underline y}_{1}\coloneqq\underline{\widehat y_0};\notag\\
  &\bfE^b_j\coloneqq3\bfC_{j} {\underline y}_{j}-\bfC_{j-1} {\underline y}_{j-1};\qquad\widehat\bfE^U_j\coloneqq3\bfM\bfU\bfK\widehat{\underline y}_{j}-\bfM\bfU\bfK\widehat{\underline y}_{j-1};\notag\\
  &\widehat\bfE^W_j\coloneqq 3\bfL(\bfW^\top\bfM\widehat{\underline y}_{j}-\bfW^\top\bfM{\underline y}_{j})-\bfL(\bfW^\top\bfM\widehat{\underline y}_{j-1}-\bfW^\top\bfM{\underline y}_{j-1});\notag\\
  &\overline\bfE^\rmm_j\coloneqq3\eta\overline\fkA_\rmm[\underline y](t_{j})-\eta\overline\fkA_\rmm[\underline y](t_{j-1});\quad \overline\fkA_\rmm[\underline y](t_0)\coloneqq\overline\fkA_\rmm[\underline y](t_1)=\overline\fkA_\rmm[\underline y](0)=0;\notag\\
  &\overline{\widehat\bfE}^\rmm_j\coloneqq3\eta\overline\fkA_\rmm[\widehat{\underline y}](t_{j})-\eta\overline\fkA_\rmm[\widehat{\underline y}](t_{j-1});\quad \overline\fkA_\rmm[\widehat{\underline y}](t_0)\coloneqq\overline\fkA_\rmm[\widehat{\underline y}](t_1)=\overline\fkA_\rmm[\widehat{\underline y}](0)=0;\notag
\end{align}
with~$\overline\fkA_\rmm[{\underline y}](t_{j})$ and~$\overline\fkA_\rmm[\widehat{\underline y}](t_{j})$ as in~\eqref{fkAxi-j.ws}/\eqref{fkAxi-j.sm}, $\bfK$ as in~\eqref{bfK}, and $\bfL$ as in~\eqref{bfL}.

\subsection{Spatio-temporal meshes}
\begin{figure}[htbp]
\centering
\subfigure[Spatial triangulation~$\fkT_0^{2\times2}$.\label{fig:meshesfkM0-2}]
{\includegraphics[width=0.310\textwidth]{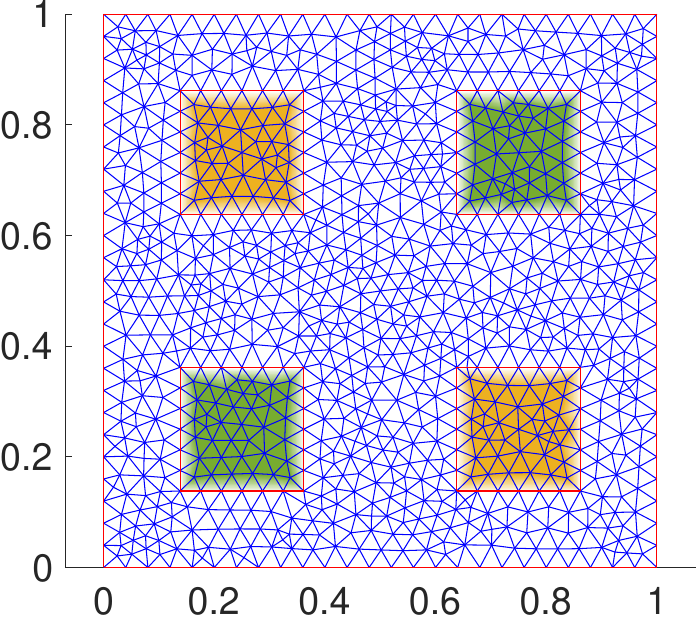}}
\;
\subfigure[Spatial triangulation~$\fkT_0^{4\times4}$.\label{fig:meshesfkM0-4}]
{\includegraphics[width=0.310\textwidth]{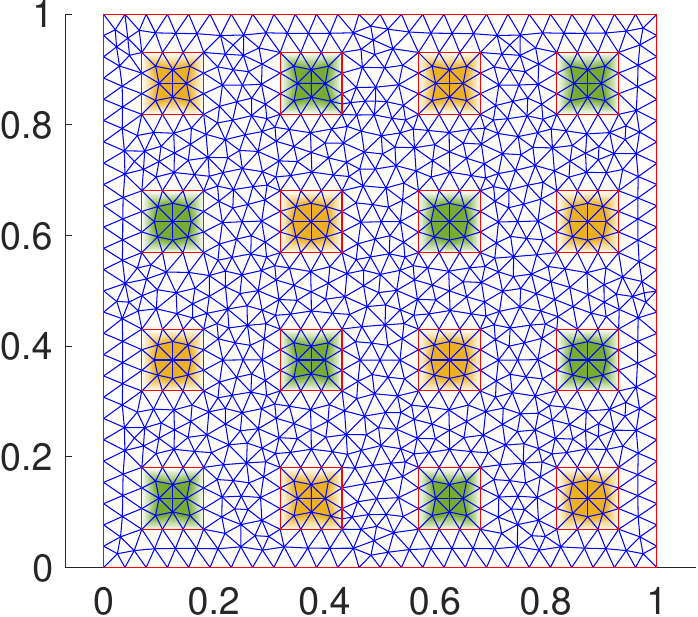}}
\;
\subfigure[Spatial triangulation~$\fkT_0^{6\times6}$.\label{fig:meshesfkM0-6}]
{\includegraphics[width=0.310\textwidth]{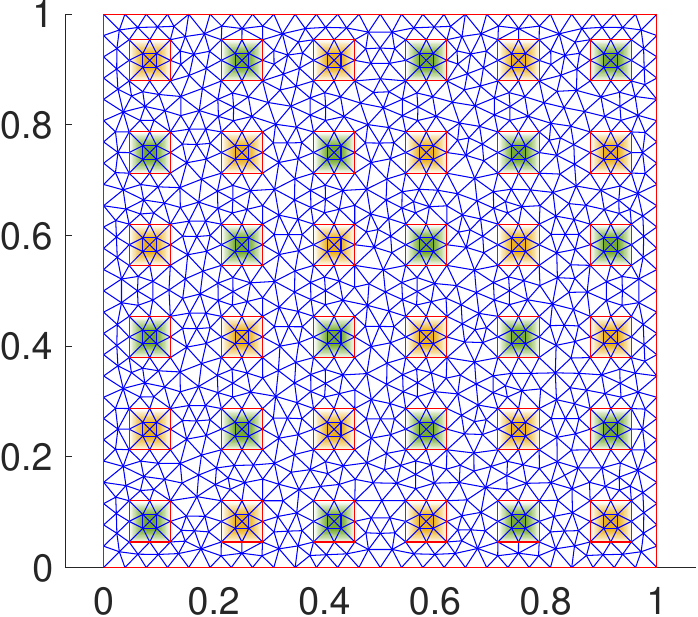}}
\caption{Coarsest spatial triangulations~$\fkT=\fkT_0^{\ell\times\ell}$, $\ell\in\{2,4,6\}$. }\label{fig:meshesfkM0}
\end{figure}
In Fig.~\ref{fig:meshesfkM0} we see triangulations of the spatial domain depending on the supports of the actuators and sensors in a chessboard pattern as is Fig.~\ref{fig:suppActSensGrid}; with one of the actuators placed at the left-botton corner.
Concerning the time-step we have chosen~$k=t^{\rm step}_0=4\cdot 10^{-4}$. This gives us a pair~$(k,\fkT)=(t^{\rm step}_0,\fkT_0^{\ell\times\ell})\eqqcolon \fkM_0^{\ell\times\ell}$ defining the  spatio-temporal discretization. 

Some simulations, will also be performed on refinements~$\fkM_{\rm rf}^{\ell\times\ell}$ of~$\fkM_0^{\ell\times\ell}$, ${\rm rf}\in\{1,2,3\}$. That is, we shall consider the three spatio-temporal meshes
\begin{equation}\label{meshes}
\fkM_{\rm rf}^{\ell\times\ell}\coloneqq(2^{-{\rm rf}} t^{\rm step}_0,\fkT_{\rm rf}^{\ell\times\ell}),\qquad 0\le {\rm rf}\le 3,\qquad t^{\rm step}_0=4\cdot 10^{-4},
\end{equation}
where~$\fkT_{\rm rf}^{\ell\times\ell}$ is a regular refinement of~$\fkT_{{\rm rf}-1}^{\ell\times\ell}$. 

\noindent
{\bf Notation:}
The data~``${\rm rf}$'' and~``$N=\ell\times\ell$'', in the annotation within the figures hereafter,  means that the simulations were performed in the spatio-temporal mesh~$\fkM_{{\rm rf}}^{\ell\times\ell}$. The function~$\ln=\log_\rme$ in the title of some figures is the natural (base~$\rme$) logarithm.

\subsection{Instability of the free dynamics}\label{ssec:num-free22}
In Fig.~\ref{fig:num-free-y} we can see that, with vanishing control forcing (i.e., with~$\lambda_1=0$), the norm of the state does not converge to zero. This means that the free dynamics of the equation~\eqref{sys-num-y-2} is unstable.
Such instability is also observed in Fig.~\ref{fig:num-free-dif}
\begin{figure}[htbp]
\centering
\subfigure[No output-injection\label{fig:num-free-dif}]
{\includegraphics[width=0.310\textwidth]{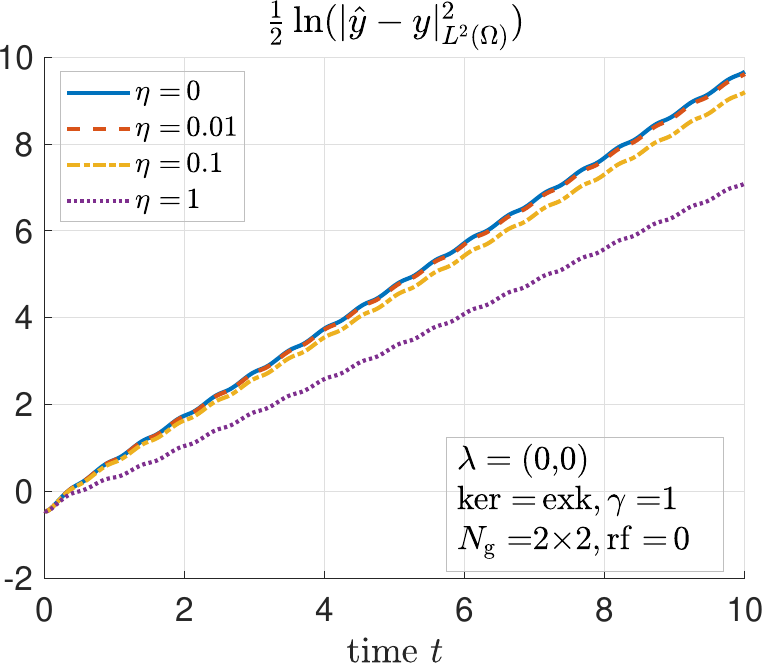}}
\qquad
\subfigure[No feedback-input\label{fig:num-free-y}]
{\includegraphics[width=0.310\textwidth]{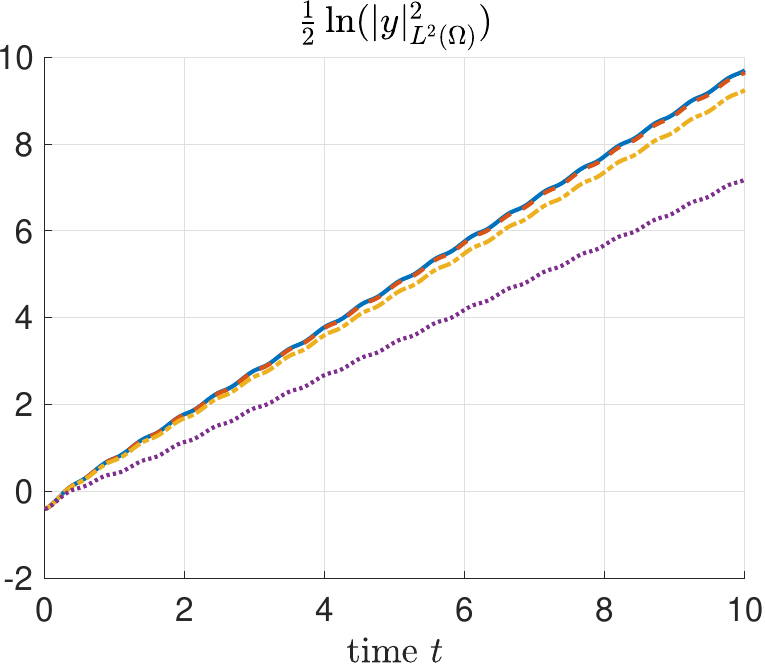}}
\caption{Closed-loop free dynamics~\eqref{sys-num-2}, for several values of~$\eta$.}\label{fig:num-free}
\end{figure}
where we see that, with vanishing output-injection forcing (i.e., with~$\lambda_2=0$) and with the same vanishing control forcing, the state estimate~$\widehat y(t)$ does not converge to~$y(t)$.
Therefore, we need a nonzero control forcing to stabilize the equation~\eqref{sys-num-y-2} and a nonzero output-injection forcing as well to guarantee that the estimate provided by the observer~\eqref{sys-num-haty-2}  approaches the (unknown) controlled state as time increases.

\begin{remark}\label{rem:mem-stab-effect}
Though the free dynamics is unstable for all~$\eta\in\{0,0.01,0.1,1\}$, Fig.~\ref{fig:num-free-y} shows that, in this experiment, the memory term reduces the instability of~\eqref{sys-num-y-2}, that is, the increase rate (i.e., ``the'' slope of the plotted lines) is smaller for larger~$\eta$, suggesting that the memory term has a damping effect in the dynamics.
\end{remark}

\subsection{Using  2 actuators and 2 sensors}\label{ssec:num22}
Here we fix~$\eta=0.01$ and consider actuators and sensors placed as in Fig.~\ref{fig:meshesfkM0-2}.  We show the results corresponding to $\lambda_1=200$ and several choices of~$\lambda_2$ in Fig.~\ref{fig:num-22eta.01},
\begin{figure}[htbp]
\centering
\subfigure
{\includegraphics[width=0.310\textwidth]{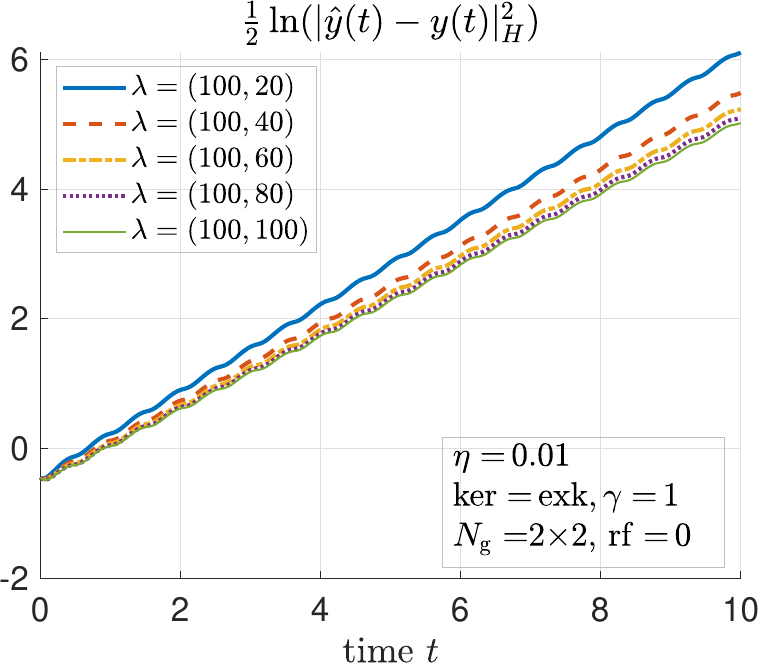}}
\qquad
\subfigure
{\includegraphics[width=0.310\textwidth]{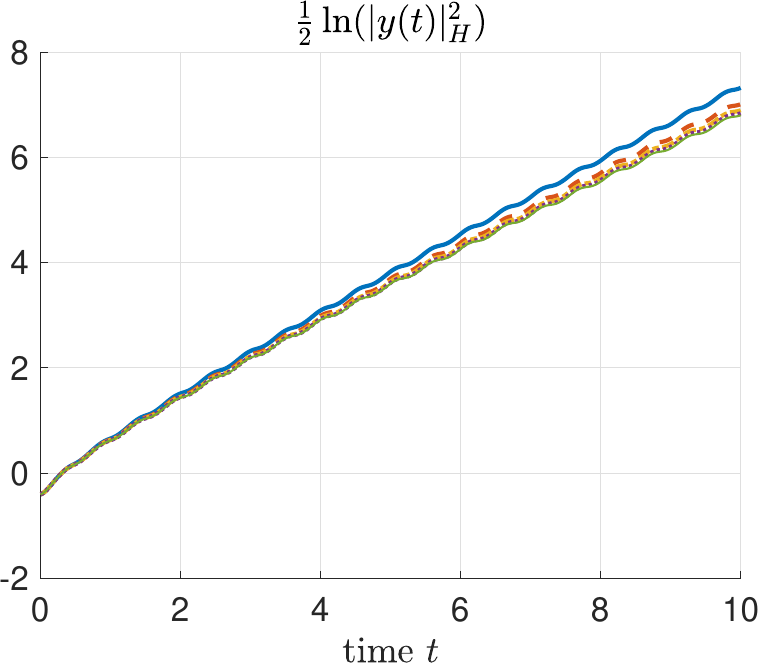}}
\caption{Using 2 actuators and 2 sensors as in~\eqref{sys-num-2}, for several choices of~$\lambda$.}\label{fig:num-22eta.01}
\end{figure}
where we see that the norms increase in all cases, for~$\eta=0.01$. We see also a converging behavior of the norms of the evolution of the norms of the solutions to an exponentially increasing function as~$\lambda_2$ increases. This observations, for example, suggest that increasing~$\lambda_2$ further will not be sufficient to achieve the desired convergence of the state estimate~$\widehat y(t)$ to the controlled state~$y(t)$,  as time~$t$ increases. 
Thus, we conclude that these 2 actuators and 2 sensors are not sufficient to guarantee the stability of the closed-loop coupled system~\eqref{sys-num-2} in the case~$\eta=0.01$. 
Next, in Fig.~\ref{fig:num-22eta0.11},
\begin{figure}[htbp]
\centering
\subfigure
{\includegraphics[width=0.310\textwidth]{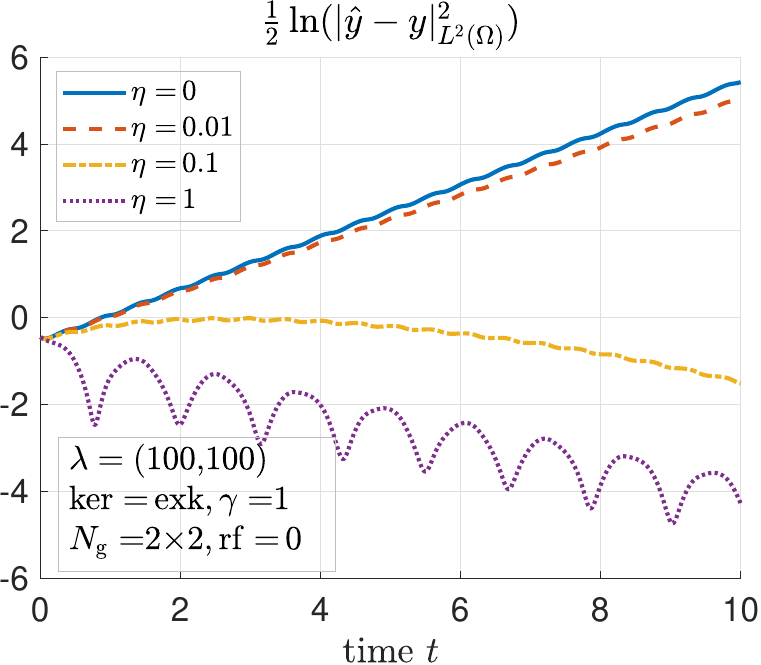}}
\qquad
\subfigure
{\includegraphics[width=0.310\textwidth]{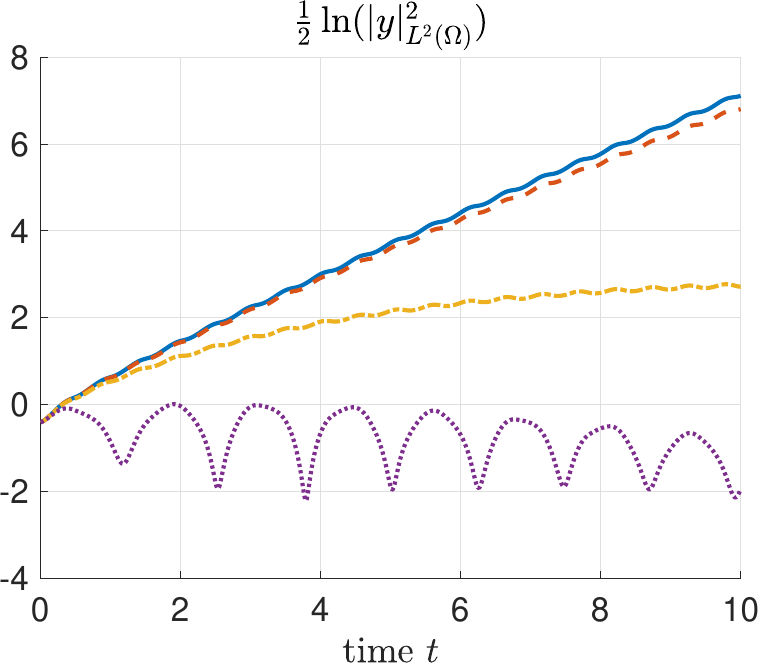}}
\caption{Using 2 actuators and 2 sensors as in~\eqref{sys-num-2}, for several values of~$\eta$.}\label{fig:num-22eta0.11}
\end{figure}
 we compare the results depending on the memory coefficient for a fixed~$\lambda=(\lambda_1,\lambda_2)$. We see that for smaller~$\eta\in\{0,0.01\}$ the considered 2 actuators and 2 sensors are not able to achieve the desired decrease of~$\norm{\widehat y(t)-y(t)}{H}$ and~$\norm{y(t)}{H}$. Instead, for larger~$\eta\in\{0.1,1\}$, the figure suggests again a damping effect of the memory term, in this case strong enough to indicate that the norm~$\norm{\widehat y(t)-y(t)}{H}$ may even decrease asymptotically to zero.

\begin{remark}\label{rem:stab-largeeta}
Within the proof of Theorem~\ref{thm:stab}, in the case of exponential kernels, the tuple~$(M,S,\lambda_1,\lambda_2)$ of parameters for feedback-input and output-injection  operators is chosen, essentially, so that the memoryless dynamics, in case~$\eta=0$, is stable, namely, so that the memoryless error dynamics is strictly decreasing. The stability of the controlled memoryless dynamics is not seen in Fig~\ref{fig:num-22eta0.11}, hence the stability observed in Fig~\ref{fig:num-22eta0.11}, for~$\eta\in\{0.1,1\}$, does not follow from the arguments within the proof.
\end{remark}

\subsection{Using 8 actuators and 8 sensors}\label{ssec:num44}
As shown in Fig.~\ref{fig:num-22eta.01} we know that~2 actuators and~2 sensors (as in Fig.~\ref{fig:meshesfkM0-2})
are not sufficient to obtain the desired exponential decrease for the norm~$\norm{\widehat y(t)-y(t)}{H}$ and for the norm~$\norm{y(t)}{H}$, in the case~$\eta=0.01$.

By the theoretical result that we presented as Theorem~\ref{thm:stab}, we know that a large enough number of sensors and actuators will provide us with the exponential decrease for~$\norm{z(t)}{H}^2+\norm{y(t)}{H}^2$, with~$z=\widehat y-y$, thus also for~$\norm{\widehat y(t)-y(t)}{H}$ and~$\norm{y(t)}{H}$ separately.

Therefore, we increase the number of actuators and sensors, by considering 
8 actuators and 8 sensors, placed as in Fig.~\ref{fig:meshesfkM0-4}, and take the same value of~$\lambda$ as in Fig.~\ref{fig:num-22eta0.11}. The results are reported in Fig.~\ref{fig:num-44eta0.11},
\begin{figure}[htbp]
\centering
\subfigure
{\includegraphics[width=0.310\textwidth]{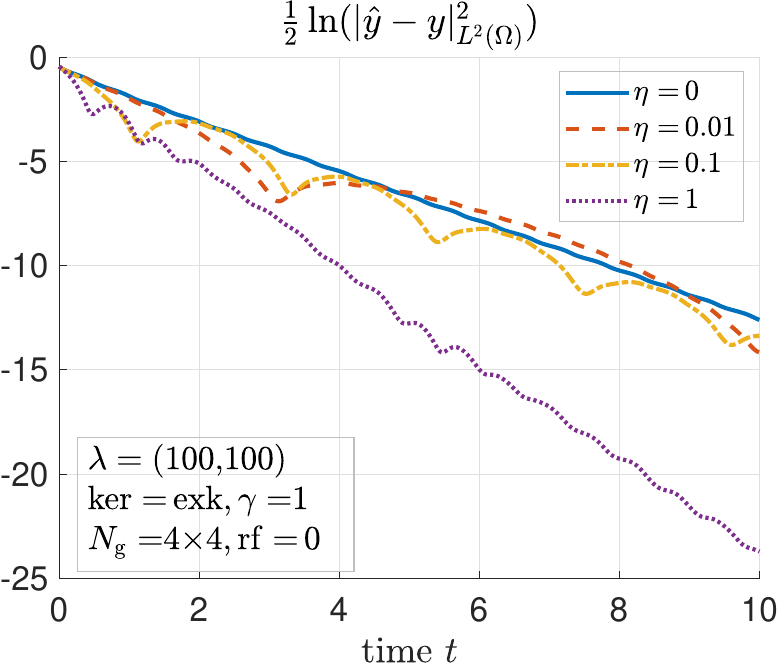}}
\qquad
\subfigure
{\includegraphics[width=0.310\textwidth]{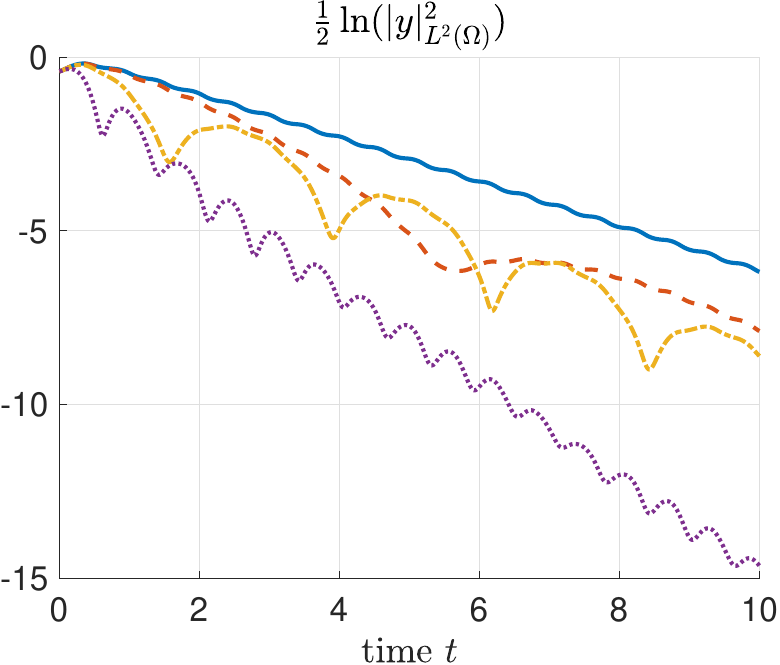}}
\caption{Using 8 actuators and 8 sensors as in~\eqref{sys-num-2}, for several values of~$\eta$.}\label{fig:num-44eta0.11}
\end{figure}
where we see that such actuators and sensors are able to provide us with the wanted exponential decrease for both~$\norm{\widehat y(t)-y(t)}{H}$ and~$\norm{y(t)}{H}$, in the cases~$\eta\in\{0,0.01,0.1,1\}$. Note that the smallest observed exponential decrease rate is approximately equal to~$1=\gamma$ (for the cases~$\eta<1$), which agrees with the estimate~\eqref{est-fkM-exp} we found in Theorem~\ref{thm:stab}, for large enough~$M$ and~$S$. Recall also that for the smaller~$\eta\in\{0,0.01\}$ (with the same~$\lambda$) the 2 actuators and 2 sensors as in Fig.~\ref{fig:meshesfkM0-4} are not providing the wanted decrease of the norms, as we have seen in Fig.~\ref{fig:num-22eta0.11}.
In particular, we see that in the memoryless case, the norm~$\norm{z(t)}{H}=\norm{\widehat y(t)-y(t)}{H}$ is strictly decreasing, which agrees with~\eqref{est-error-exp} and with the integral version as~\eqref{est-z-exp}, where the memory contributions do vanish,~$\fkM_z(s)$ for all~$s\ge0$, (cf.~\cite[Thm.~2.1]{Becker07}). On the other side in case~$\eta>0$ we see that~$\norm{z(t)}{H}$ is not strictly decreasing, which again agrees with the fact that constants~$\fkM_z(s)$ may be strictly positive, for some~$s>0$. In particular, we see that memory term has a relevant effect on the evolution of the norms~$\norm{z(t)}{H}$ and~$\norm{y(t)}{H}$.

\subsection{Using 18 actuators and 18 sensors}\label{ssec:num66}
We increase the number of actuators and sensors to~18, as in Fig.~\ref{fig:meshesfkM0-6}. In Fig.~\ref{fig:num-66eta0.11}
\begin{figure}[htbp]
\centering
\subfigure
{\includegraphics[width=0.310\textwidth]{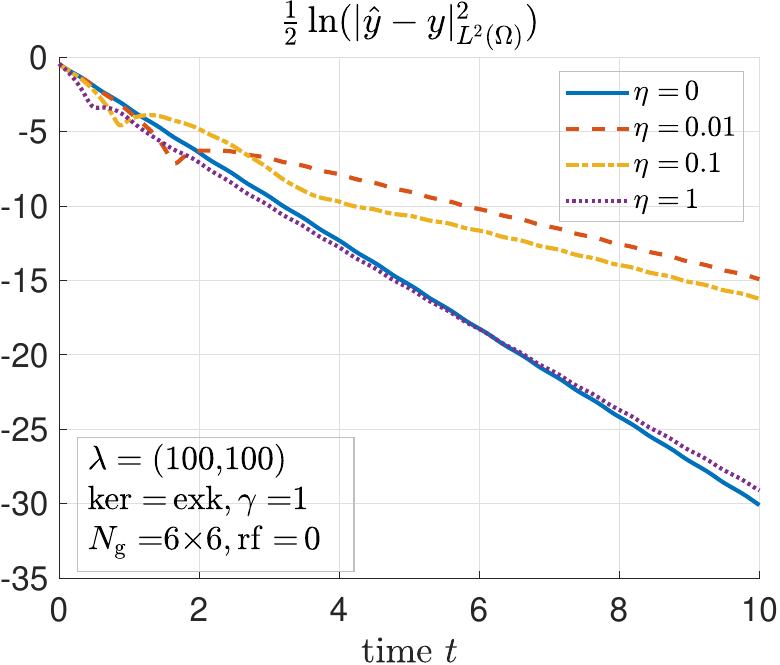}}
\qquad
\subfigure
{\includegraphics[width=0.310\textwidth]{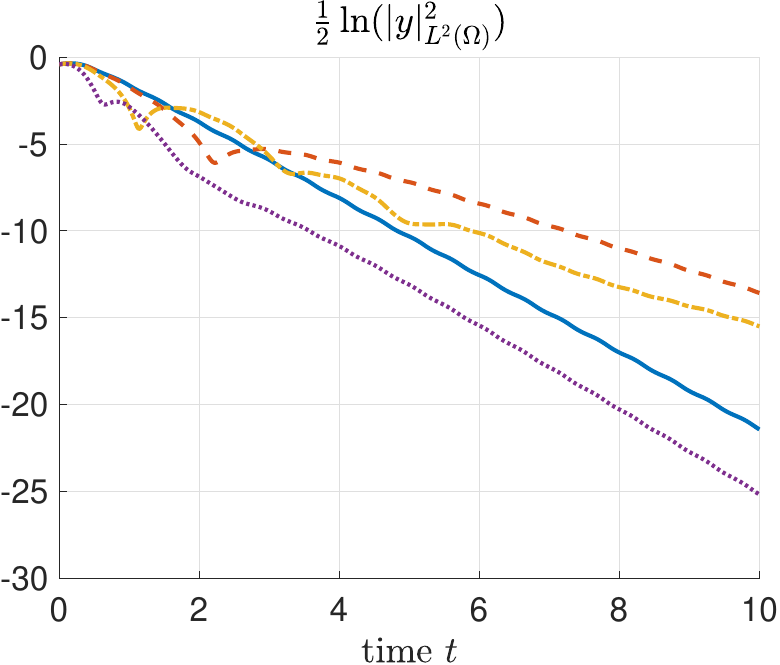}}
\caption{Using 18 actuators and 18 sensors as in~\eqref{sys-num-2}, for several values of~$\eta$.}\label{fig:num-66eta0.11}
\end{figure}
 we see again that the exponential decrease is achieved with rate~$1=\gamma$ validating again the result in Theorem~\ref{thm:stab}, for exponential kernels. As expected, by taking more actuators and sensors we obtain a quicker stabilization rate (approx.~$3$) for the memoryless case, however under the  presence of a memory term we obtain, for the cases~$\eta\in\{0.01,0.1\}$, only the exponential stabilization rate~$\gamma=1$ given by the kernel. This suggests that the kernel rate~$\gamma=1$ is the largest possible guaranteed stabilization rate, and consequently the statement in Theorem~\ref{thm:stab} is sharp, for exponential kernels and for the explicit feedback-input and output-injection operators that we propose.

\subsection{Remarks}\label{sec:num-exp-rmk}
The results of simulations, reported throughout Sections~\ref{ssec:num-free22}--\ref{ssec:num66}, validate the statement in Theorem~\ref{thm:stab}, for exponential kernels. In particular, an exponential decrease rate as~$\gamma$, will be achieved for~$\norm{\widehat y(t)-y(t)}{H}$ and~$\norm{y(t)}{H}$, provided the numbers of actuators and sensors, $M_\sigma$ and~$S_\varsigma$, and the values of feedback and injection scalar gains, $\lambda_1$ and~$\lambda_2$, are all large enough. Further, the results of simulations show that it may be not possible, in general, to achieve a stabilization rate larger
than the decrease rate~$\gamma$ of the exponential kernel.

\section{Additional results of simulations and comments}\label{sec:num-sim-res}

\subsection{Simulations with a weakly singular kernel}\label{sec:num-sim-wsk}
Figures~\ref{fig:num-22eta0.11wsk}--\ref{fig:num-66eta0.11wsk}
\begin{figure}[htbp]
\centering
\subfigure
{\includegraphics[width=0.310\textwidth]{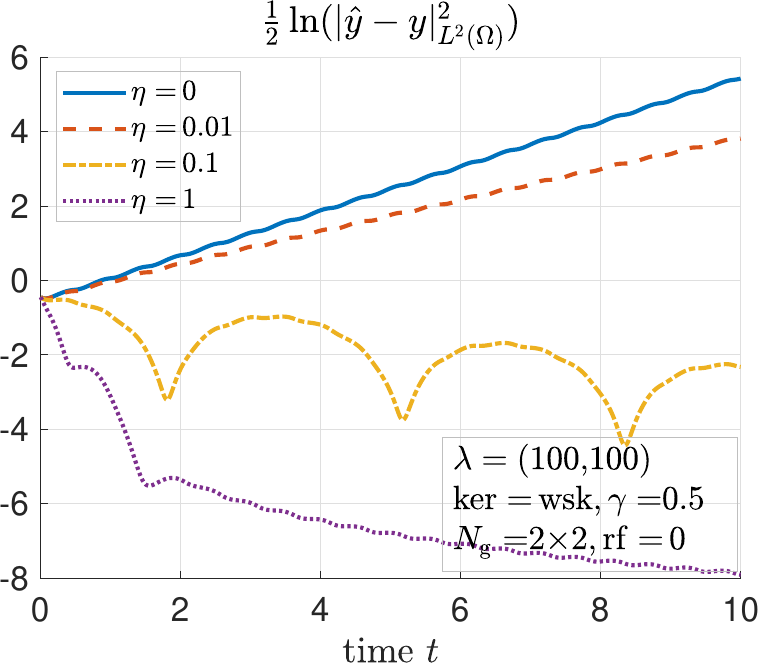}}
\qquad
\subfigure
{\includegraphics[width=0.310\textwidth]{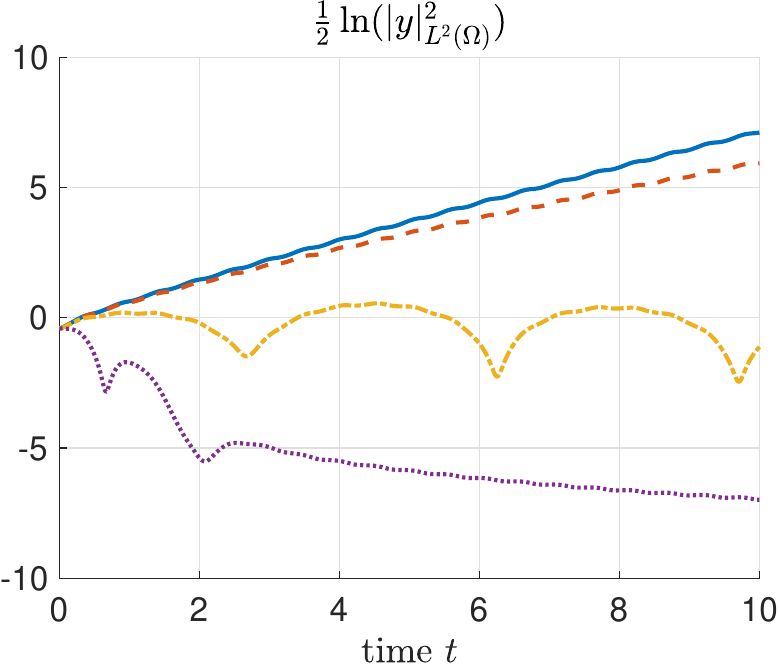}}
\caption{Using 4 actuators and 4 sensors in~\eqref{sys-num-2}, for several values of~$\eta$.}\label{fig:num-22eta0.11wsk}
\end{figure}
\begin{figure}[htbp]
\centering
\subfigure
{\includegraphics[width=0.310\textwidth]{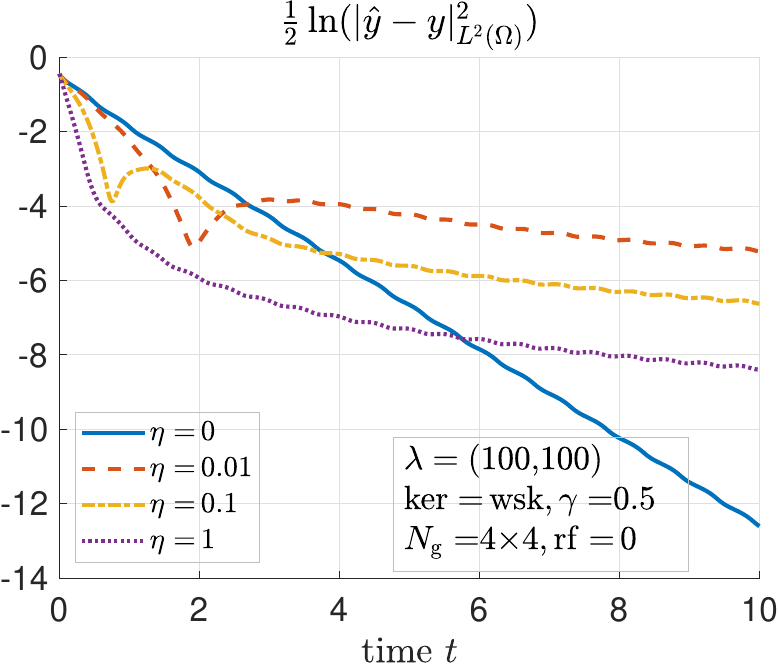}}
\qquad
\subfigure
{\includegraphics[width=0.310\textwidth]{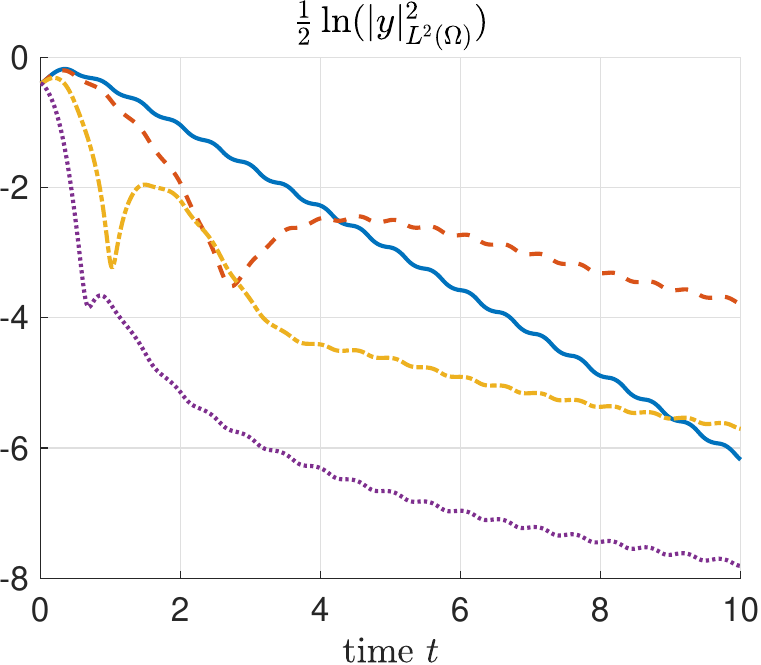}}
\caption{Using 8 actuators and 8 sensors as in~\eqref{sys-num-2}, for several values of~$\eta$.}\label{fig:num-44eta0.11wsk}
\end{figure}
\begin{figure}[htbp]
\centering
\subfigure
{\includegraphics[width=0.310\textwidth]{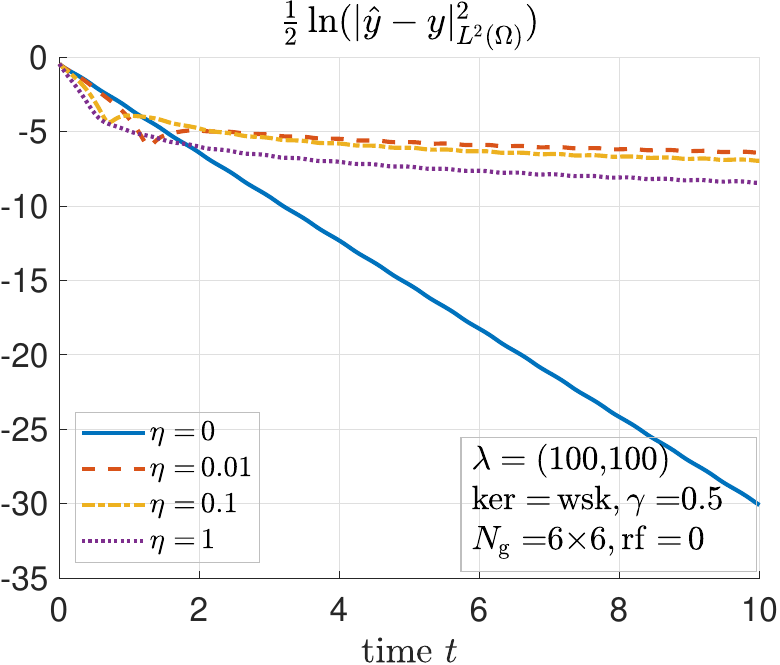}}
\qquad
\subfigure
{\includegraphics[width=0.310\textwidth]{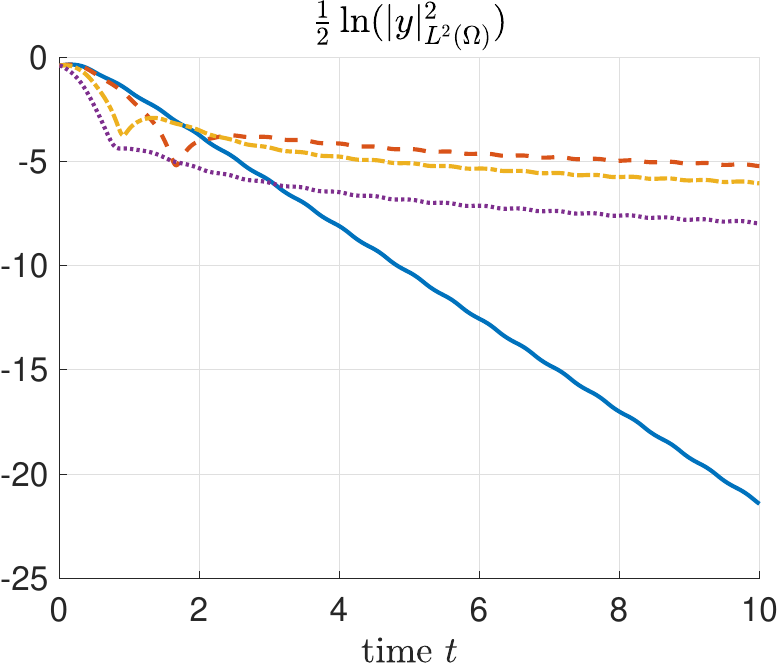}}
\caption{Using 18 actuators and 18 sensors as in~\eqref{sys-num-2}, for several values of~$\eta$.}\label{fig:num-66eta0.11wsk}
\end{figure}
show again the evolution of the norms of the state-estimate error~$\widehat y-y$ and of the controlled state~$y$. Now, corresponding to the case where the kernel is the weakly singular Riesz kernel~\eqref{wsk-intro}. The figures show that 
asymptotic stability can be achieved for large enough numbers of actuators and sensors, by looking at the negative slope of the plotted lines for large time. The likely asymptotic convergence to zero, observed in Figs.~\ref{fig:num-44eta0.11wsk}--\ref{fig:num-66eta0.11wsk}, is slower that that observed in Figs.~\ref{fig:num-44eta0.11}--\ref{fig:num-66eta0.11}. The figures also show that there exists a ``rate'' of asymptotic convergence of the norms to~$0$ which is not improved by taking more sensors and actuators. Thus, the results are qualitatively comparable to the case of exponential kernels. However, recall that for general kernels (other than exponential ones) we have only shown Lyapunov stability of the closed-loop system; see~\eqref{est-fkM-lyap}. The proof of the asymptotic stabilizability suggested in  Figs.~\ref{fig:num-22eta0.11wsk}--\ref{fig:num-66eta0.11wsk} is an interesting question.
At this point, we  mention a few works containing arguments that could be helpful towards a proof of asymptotic stabilizability, but also showing that this is a nontrivial problem. In~\cite[Thm.~(2.1)]{MacCamyWong72}, weak asymptotic stability is obtained in case we know that the solution is weakly bounded and weakly uniformly continuous; in~\cite[Thm.~4]{NohelShea76} a frequency domain criterion is used to derive the asymptotic stability; in~\cite[Thm.~3.4]{MaininiMola09} polynomial asymptotic stability results are derived, for the heat equation with memory, for smooth polynomial kernels~$\fkK(t)=(t+1)^{-p}$, $p>2$.

\subsection{Remarks on the numerical solution for refined meshes}
Recall the spatio-temporal meshes~$\fkM_{\rm rf}^{\ell\times\ell}$ in~\eqref{meshes}, ${\rm rf}\in\{0,1,2,3\}$.
We have run the previous simulations in the meshes~$\fkM_0^{\ell\times\ell}$. Here, we compare numerical solutions computed on these meshes to the corresponding ones computed in the refinements~$\fkM_{\rm rf}^{\ell\times\ell}$, ${\rm rf}\in\{1,2,3\}$. We investigate whether~$\fkM_0^{\ell\times\ell}$ is fine enough to capture essential qualitative properties of the solutions.

\subsubsection{Free dynamics: comparison to an exact solution}
We consider the function
\begin{equation}\label{yexa}
y^{\rm exa}(t,x_1,x_2)=y^{\rm exa}(x_1,x_2)\coloneqq\cos(\pi x_1)\cos(2\pi x_2)+\cos(2\pi x_1)+2.
\end{equation}
This function satisfies the considered Neumann boundary conditions and is the exact solution of the free dynamics~\eqref{sys-num-y-2}, provided we take the matching external forcing as follows
\begin{equation}\notag%
f^{\rm exa}\coloneqq \tfrac{\partial}{\partial t} y^{\rm exa}-\nu\Delta y^{\rm exa}+y_{\rm exa} +ay_{\rm exa}+b\cdot\nabla y_{\rm exa}+\eta\int_0^t(t-s)^{\gamma-1}(-\Delta)y_{\rm exa}(s)\,\rmd s.
\end{equation}
Note that, we can compute $f_{\rm exa}$ explicitly, namely, by straightforward computations, 
\begin{subequations}\label{fexa}
\begin{align}
f_{\rm exa}(t,x_1,x_2)&=\nu 5\pi^2 y_{\rm exa}(x_1,x_2) +y_{\rm exa}(x_1,x_2)+a(t,x_1,x_2)y_{\rm exa}(x_1,x_2)\notag\\&\quad+b_1(t,x_1,x_2)g_1(x_1,x_2) +b_2(t,x_1,x_2) g_2(x_1,x_2)\notag\\&\quad+\eta\gamma^{-1}t^\gamma 5\pi^2 y_{\rm exa}(x_1,x_2)\\
\mbox{with}\quad&g_1(x_1,x_2) \coloneqq-\pi\sin(\pi x_1)\cos(2\pi x_2)-2\pi\sin(2\pi x_1)\\
\mbox{and}\quad&
g_2(x_1,x_2) \coloneqq-2\pi\cos(\pi x_1)\sin(2\pi x_2).
\end{align}
\end{subequations}

So we check our discretization by solving the analogue of~\eqref{sys-num-fullydiscrete}, for the free-dynamics, with the external forcing~\eqref{fexa}. Using a Crank--Nicolson approximation for~\eqref{fexa} we find
\begin{subequations}\label{sys-num-fullydiscrete-free}
\begin{align}
&\bfX_{j+1}^+{\underline y}_{j+1}^{\rm num}= \bfX_{j}^-{\underline y}_{j}^{\rm num}
  -k\bfE^b_j-k\overline\bfE^\xi_j+k(\underline f^{\rm exa}(t_j)+(\underline f^{\rm exa}(t_{j+1})),
  \qquad  j\in\bbN_+;\\
  &{\underline y}_{0}^{\rm num}={\underline y}^{\rm exa};
 \end{align} 
 \end{subequations}
 where~$\bfX_{j+1}^+$, $\bfX_{j}^-$, $\bfE^b_j$, and~$\overline\bfE^\xi_j$ are as in~\eqref{sys-num-fullydiscrete}.
In Fig.~\ref{fig:numerror-exa} we see the evolution (of the base-2 logarithm) of the numerical error decreases as the spatio-temporal mesh is refined, for three values of~$\eta$.
We see that the plotted errors decrease as the mesh is refined. Further, the figures suggest a quadratic convergence to the exact solution.
\begin{figure}[htbp]
\centering
\subfigure
{\includegraphics[width=0.310\textwidth]{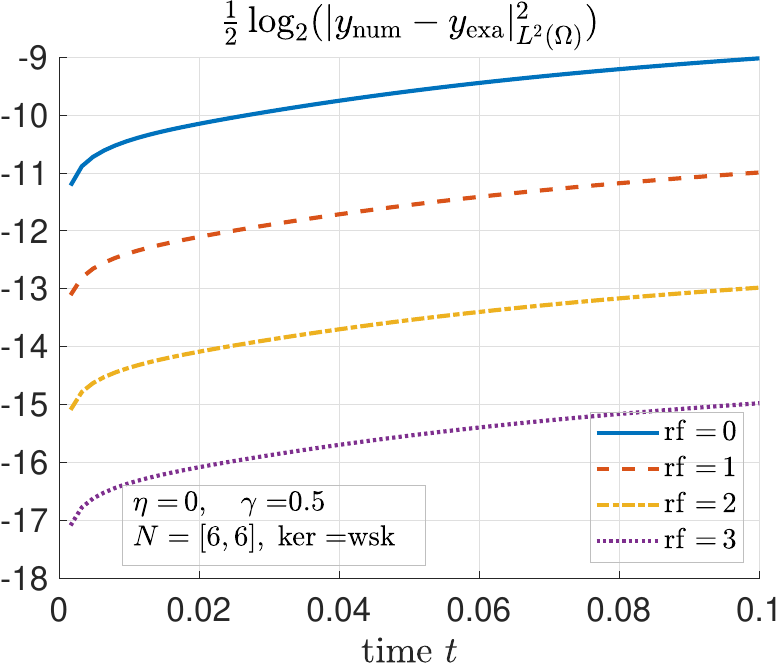}}
\;
\subfigure
{\includegraphics[width=0.310\textwidth]{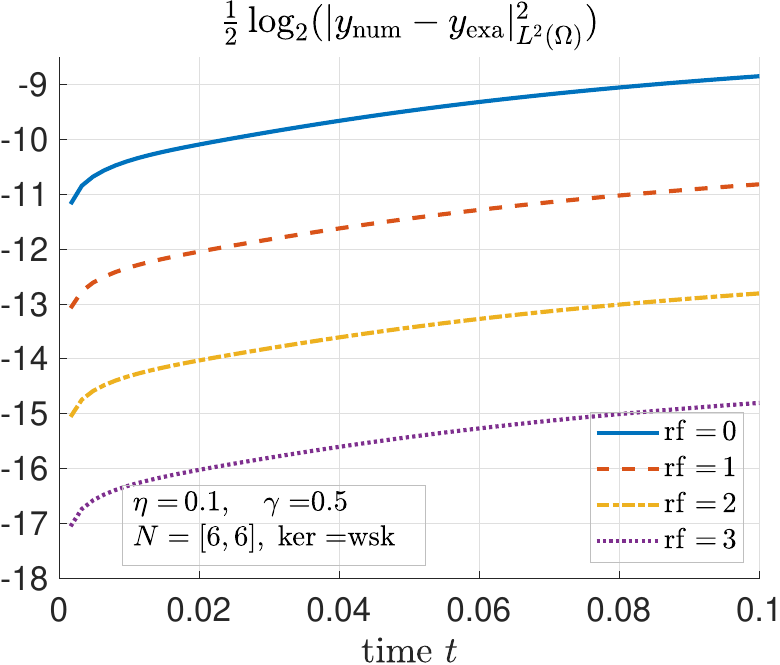}}
\;
\subfigure
{\includegraphics[width=0.310\textwidth]{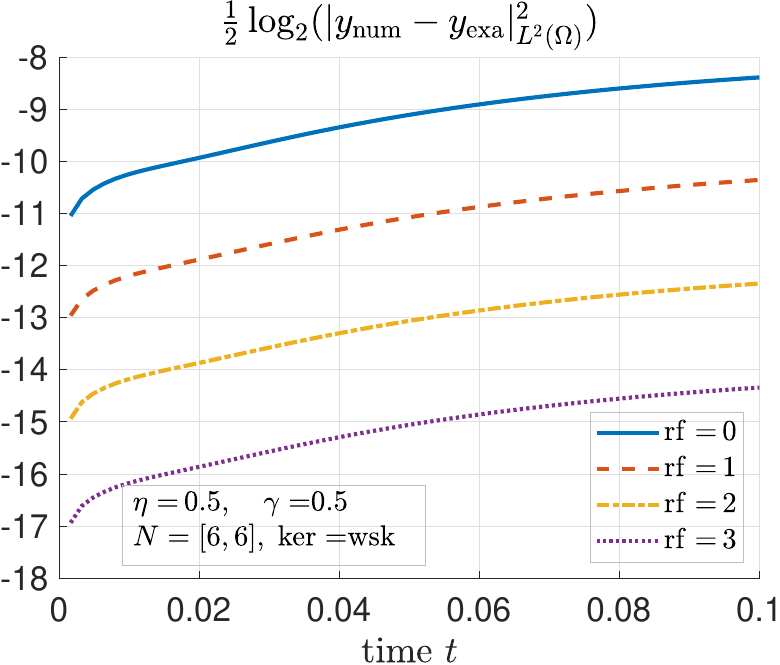}}
\caption{Mesh refinements. Numerical error for the free dynamics~\eqref{sys-num-y-2}.}\label{fig:numerror-exa}
\end{figure}

\subsubsection{Free dynamics: the closed-loop solution}
In Fig.~\ref{fig:num-free-CL-refs},
\begin{figure}[htbp]
\centering
\subfigure
{\includegraphics[width=0.310\textwidth]{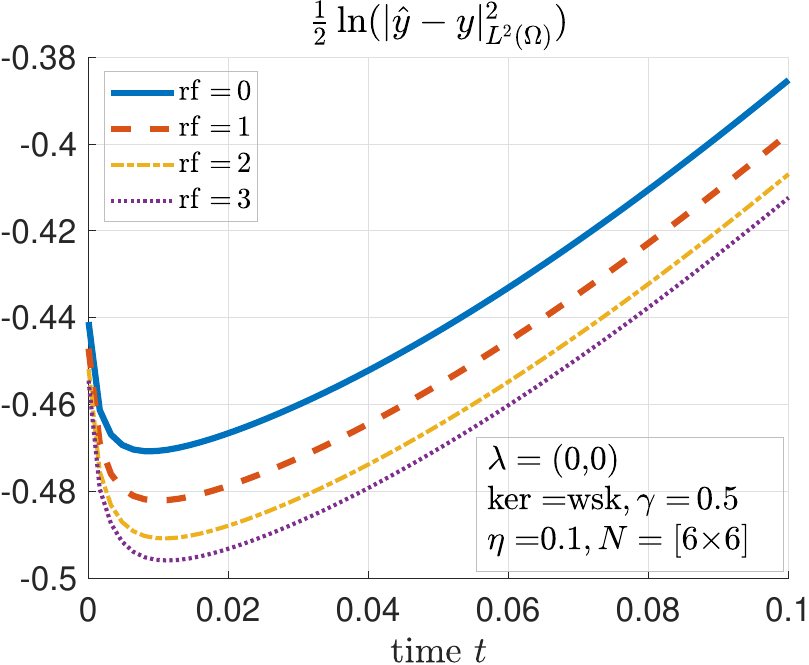}}
\qquad
\subfigure
{\includegraphics[width=0.310\textwidth]{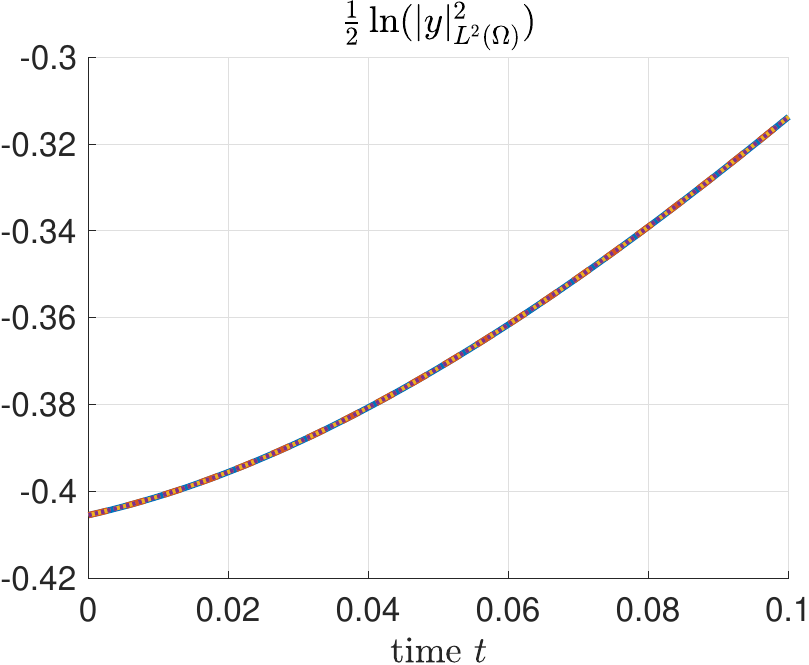}}
\caption{Mesh refinements. Free closed-loop dynamics.}\label{fig:num-free-CL-refs}
\end{figure}
 for the case~$\eta=0.1$,
we see the evolution of the norms for the solution, with zero control and injection forcings, computed in refined spatio-temporal meshes~$\fkM_{\rm rf}^{\ell\times\ell}$, ${\rm rf}\in\{0,1,2,3\}$ and~$\ell=6$ as in Section~\ref{ssec:num-free22}. We observe that the solution in the coarsest mesh~$\fkM_{0}^{\ell\times\ell}$ gives us already a good picture of the qualitative behavior of the evolution of the norms of the estimate error~$\widehat y(t)-y(t)$ and of the uncontrolled state~$y(t)$. Indeed, with the naked eye, we can hardly see a difference between the evolution of the norm of~$y$ for the three numerical solutions. The difference that we see for the evolution of the norm of~$\widehat y-y$ is due to the fact that the initial condition~$\widehat y_0=P_{\clW_S}^{\clW_S^\perp}y_0$, as in~\eqref{num-param-ic}, for the observer was computed numerically; still the behavior of the evolution of the norm of~$\widehat y-y$ in Fig.~\ref{fig:num-free-CL-refs} is similar for all refinements, with a similar instability rate.

Note that we have chosen~$\widehat y_0=P_{\clW_S}^{\clW_S^\perp}y_0$, simply to use the available output at initial time~$t=0$. Of course, we can neglect this output and simply take any other ``guess''~$\widehat y_0$ to initialize the observer. For example, if we simply take~$\widehat y_0=0$, we obtain the results in Fig.~\ref{fig:num-free-CL-refs-hatyic0}.
\begin{figure}[htbp]
\centering
\subfigure
{\includegraphics[width=0.310\textwidth]{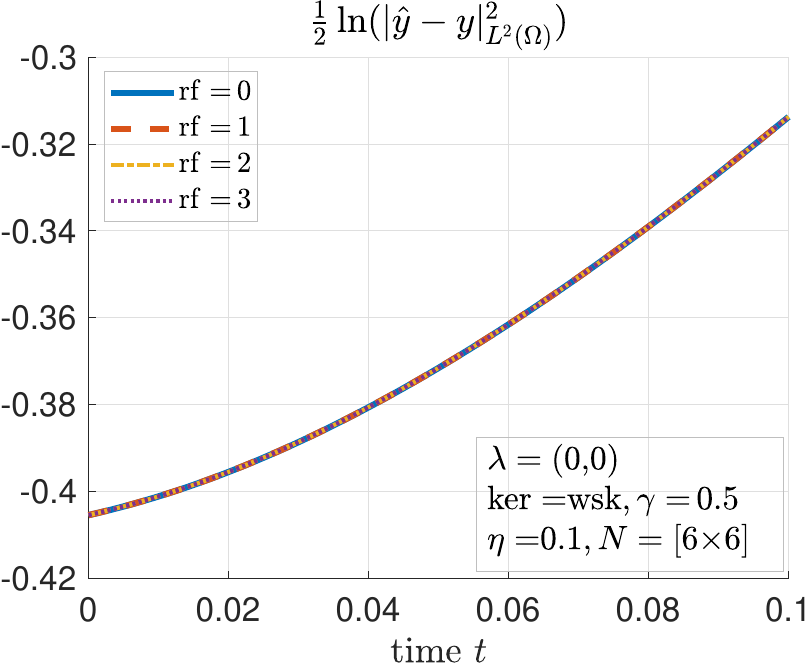}}
\qquad
\subfigure
{\includegraphics[width=0.310\textwidth]{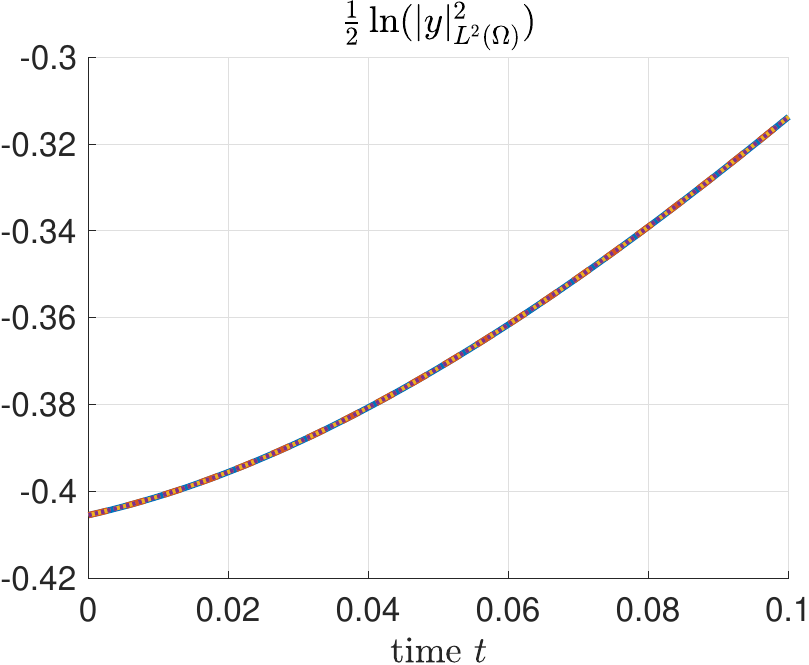}}
\caption{Mesh refinements. Free closed-loop dynamics with~$\widehat y_0=0$.}\label{fig:num-free-CL-refs-hatyic0}
\end{figure}
Now, with the naked eye, we cannot see a difference between the evolution of the norm of~$y$ for the three numerical solutions. Note that, now the numerical approximation of~$\widehat y_0=0$ does not involve any numerical computations.

\subsubsection{Controlled-injected dynamics: the closed-loop solution}
Here we take~$\ell=6$ and the corresponding to 18 sensors and 18 actuators as in Section~\ref{ssec:num44}. Now, in Fig.~\ref{fig:num-66-CL-refs},
\begin{figure}[htbp]
\centering
\subfigure
{\includegraphics[width=0.310\textwidth]{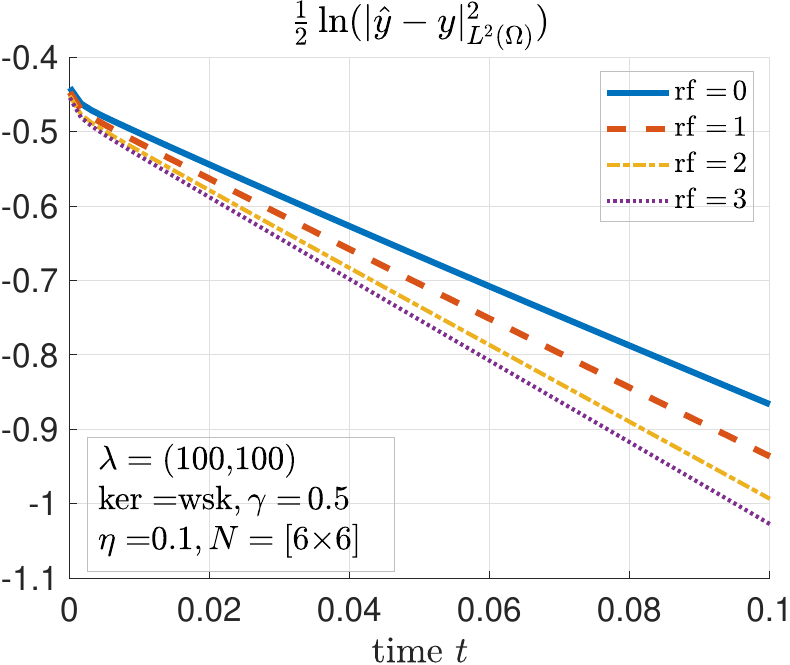}}
\qquad
\subfigure
{\includegraphics[width=0.310\textwidth]{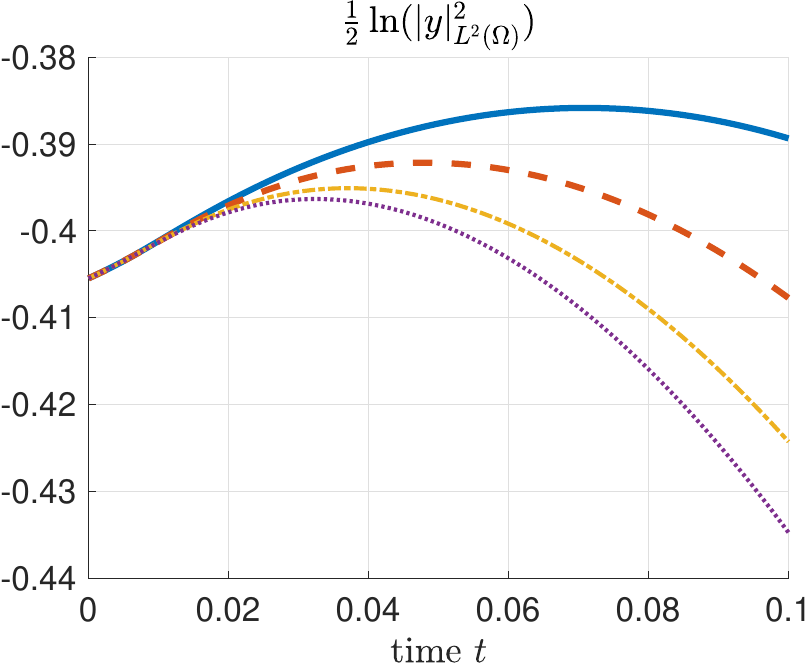}}
\caption{Mesh refinements. Controlled-injected closed-loop dynamics. }\label{fig:num-66-CL-refs}
\end{figure}
 for the case~$\eta=0.1$, we still see that the solution in the coarsest mesh~$\fkM_{0}^{\ell\times\ell}$ already gives us a good picture on the qualitative behavior of the evolution of the same norms.
Again, the results in Fig.~\ref{fig:num-66-CL-refs} correspond to the case where the initial condition~$\widehat y(0)=\widehat y_0=P_{\clW_S}^{\clW_S^\perp}y_0$ was taken for the observer. For the sake of completeness in Fig.~\ref{fig:num-66-CL-refs-hatyic0} we present the analogue of~\ref{fig:num-66-CL-refs-hatyic0} where we have taken~$\widehat y_0=0$ as our ``guess'' for the initial state.
\begin{figure}[htbp]
\centering
\subfigure
{\includegraphics[width=0.310\textwidth]{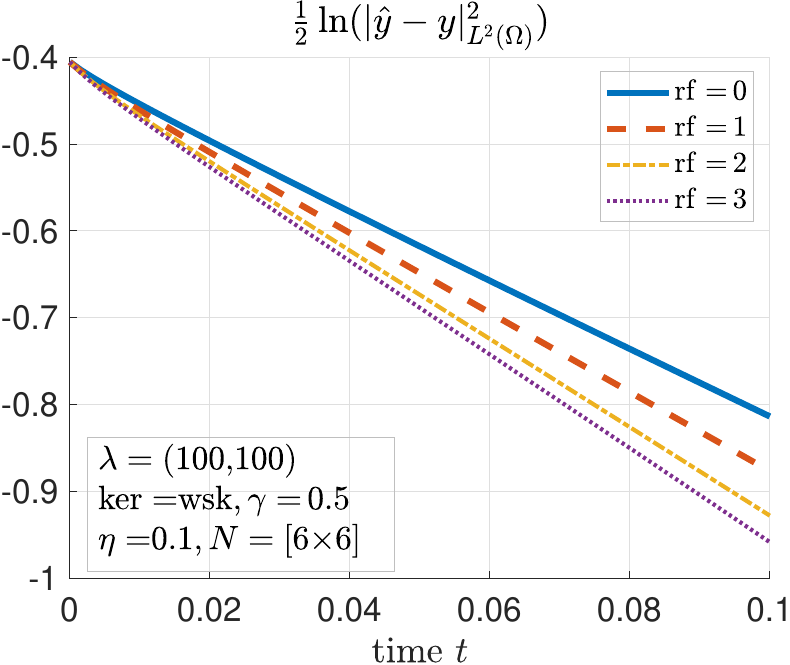}}
\qquad
\subfigure
{\includegraphics[width=0.310\textwidth]{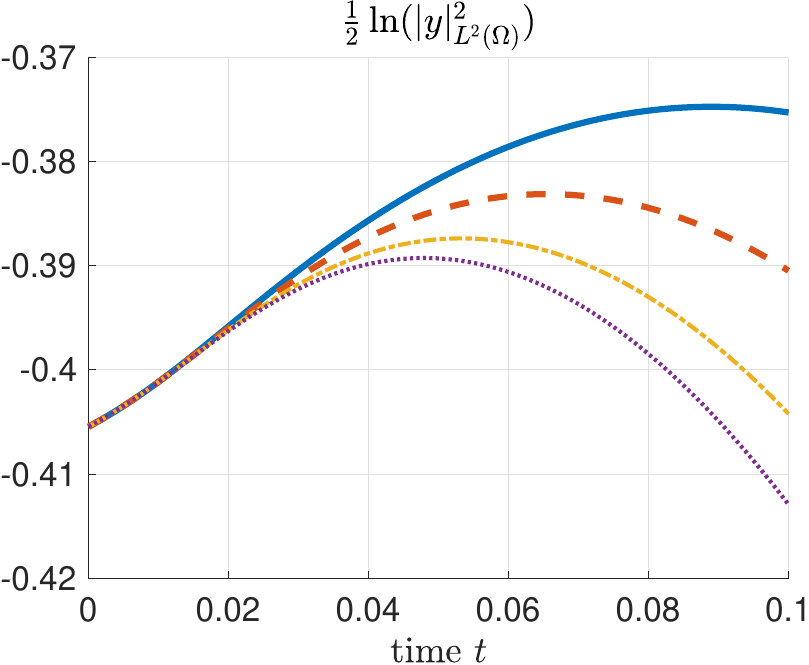}}
\caption{Mesh refinements. Controlled-injected closed-loop dynamics with~$\widehat y_0=0$.}\label{fig:num-66-CL-refs-hatyic0}
\end{figure}
Finally, note the convergence observed, in Figs.~\ref{fig:num-free-CL-refs}--\ref{fig:num-66-CL-refs-hatyic0}, for the evolution of the norms, as we refine the spatio-temporal mesh, thus showing that the numerical solution in the coarsest mesh is already able to capture satisfactorily the essential stability properties of the system.

\section{Static output-based feedback stabilizability}\label{sec:finrem-static-output}
We have focused on the general case where the actuators and sensors may correspond to different devices. Here, we consider the case where each actuator device also incorporates a sensor giving us the output~$U_M^\vee y$. In this case,
we do not need the observer to compute a state estimate. Indeed, without the observer system~\eqref{sys-num} reads
\begin{align}\label{sys-num-y-static}
\tfrac{\partial}{\partial t} {y}&=-(-\nu\Delta+\Id)y  -  a y - b \cdot\nabla y  -\eta \int_0^t \fkK(t-s)(-\Delta)y(s)ds\notag\\
&\quad +U_M^\diamond \Bigl(- \lambda_1[\clV_U]^{-1}U_M^\vee P_{\clU_M}^{\widetilde\clU_M^\perp}P_{\widetilde\clU_M}^{\clU_M^\perp} y\Bigr),
&&\quad y(0) = y_0
\end{align}
and, by recalling (the analogues of)~\eqref{output-Proj} and~\eqref{ProjOb=ObOr} we can write the input as
\begin{align}
u&=- \lambda_1[\clV_U]^{-1}U_M^\vee P_{\clU_M}^{\widetilde\clU_M^\perp}P_{\widetilde\clU_M}^{\clU_M^\perp} y
=- \lambda_1[\clV_U]^{-1}U_M^\vee P_{\clU_M}^{\widetilde\clU_M^\perp}P_{\widetilde\clU_M}^{\clU_M^\perp}P_{\clU_M}^{\clU_M^\perp} y\notag\\
&=- \lambda_1[\clV_U]^{-1}U_M^\vee P_{\clU_M}^{\widetilde\clU_M^\perp}P_{\widetilde\clU_M}^{\clU_M^\perp}U_M^\diamond [\clV_U]^{-1}U_M^\vee y\eqqcolon \clK_{\rm stat}U_M^\vee y.\label{out-2-inp-static}
\end{align}
Hence, we have that the feedback-input~$u$ depends only on the output~$U_M^\vee y$.

Observe that, in this case~$y$ satisfies the dynamics~\eqref{sys-num-y-static} analogue to that of~$z$ in~\eqref{sys-CL-Intro-error-z} by replacing~$W_S$ by~$U_M$ in~\eqref{sys-CL-Intro-error-z}, because
\begin{align}
U_M^\diamond u=- \lambda_1 P_{\clU_M}^{\clU_M^\perp}P_{\clU_M}^{\widetilde\clU_M^\perp}P_{\widetilde\clU_M}^{\clU_M^\perp}U_M^\diamond [\clV_U]^{-1}U_M^\vee y=L_M^{[\lambda_1]} U_M^\vee y.\notag
\end{align}
with~$L_M^{[\lambda_1]}$ being as in~\eqref{KL-Oper} after replacing~$W_S$ by~$U_M$.

Therefore, the stability of~\eqref{sys-num-y-static} follows by following the arguments within the proof of Theorem~\ref{thm:stab}, leading us to the analogue of~\eqref{est-z-lyap}
\begin{align}
&\norm{y(t)}{H}^2+\fkM_y(t)+2\overline\mu\int_s^t\norm{y(\tau)}{H}^2\,\rmd\tau\le\norm{y(s)}{H}^2+\fkM_y(s)\notag
\end{align}
 for general positive kernels,
and to the analogue of~\eqref{est-z-exp}
\begin{align}
&\varphi_y(t)+\overline\fkM_y(t)+2\mu\int_s^t\norm{\varphi_y(\tau)}{H}^2\,\rmd\tau\le \varphi_y(s)+\overline\fkM_y(s),\notag
\end{align}
with~$\varphi_y(t)=\rme^{-2\gamma t}$, for exponential kernels.
Summarizing, if each actuator device at our disposal also incorporate a sensor, then we have the so-called static output-based feedback stabilizability 
of~\eqref{sys-y-Intro},
\begin{align}
&\tfrac{\partial}{\partial t}{y}(x,t)= (\nu \Delta -\Id)y(x,t) -  a(x,t) y(x,t) -b(x,t)\cdot\nabla y(x,t)\notag\\
&\hspace{4em} + \eta\int_0^t \fkK(t-s)\Delta y(x,s) \mathrm{d}s +{\textstyle\sum\limits_{j=1}^{M_\sigma}}u_j(t)              \indf_{\omega^j}(x),\notag\\
&\mathfrak{B}y(x,t)|_{\partial\Omega} = 0, \quad \quad y(x,0) = y_0(x),\notag\\
&w(t)=(w_1(t),\dots, w_{M_\sigma}(t)),\quad\mbox{with}\quad w_i(t)\coloneqq \int_{\omega^i}y(x,t)\,\rmd\Omega,\quad 1\le i\le M_\sigma,\notag
\end{align}
where now the measurements are taken in the actuators domains and the input~$u(t)$, at each time~$t\ge0$, is given as a function of the output~$w(t)$, at the same time~$t\ge0$ as in~\eqref{out-2-inp-static}, $w(t)\mapsto u(t)=\clK_{\rm stat} w(t)=L_M^{[\lambda_1]} U_M^\vee y(t)$, with~$L_M^{[\lambda_1]}$ given explicitly.

Concerning real-world applications the following observation is of paramount importance, in particular, for equations with an history memory term. Recall that, with explicit feedback-input operators as in~\eqref{KL-Oper}, the computation of the input~$u(t)=K\widehat y(t)$, at time~$t$, will be performed quickly, once we have the state-estimate~$\widehat y(t)$. However, this estimate will be given by the observer which includes a memory term. For fine discretizations of the observer this can be time-consuming and can lead to nonnegligible delays, even if the output~$w(t)$ is available in real-time. In case of a static output-feedback-input as~$u(t)=K_{\rm stat} w(t)$, with explicitly given $K_{\rm stat}$, the input can be computed in real time, thus avoiding large computation times, leading to input time-delays, potentially jeopardizing the stabilizing properties of the strategy. 

\section{Final remarks}\label{sec:finalremarks}
We have shown that we can stabilize linear parabolic-like equations with memory terms involving weakly singular positive kernels. The input is constructed from an estimate~$\widehat y(t)$ provided by a Luenberger observer constructed from a given output of local measurements.  The feedback-input and output-injection operators are given explicitly, simply by scaled orthogonal projections onto the span of the actuators and sensors, respectively. The number of actuators and sensors need to be large enough depending on suitable norms of the operators defining the (unstable) free dynamics. The appropriate placement of the actuators and sensors is also given explicit for a general convex polygonal domain.

Hereafter, we include some more observations, including a discussion on subjects which could be interesting for future works.

\subsection{On the memory terms}\label{ssec:finrem-Mem}

For general kernels of positive type, we have shown that Lyapunov stability  of the closed-loop system can be achieved by taking a large enough number of actuators and sensors. For exponential kernels we have shown that we can even achieve exponential stability. 
It would also be interesting to know whether asymptotic stability can be proven for more general kernels as suggested by the simulation results in Section~\ref{sec:num-sim-wsk} for weakly singular kernels. If so, it would be also interesting to know whether a decrease rate (e.g., exponential or polynomial) can be proven as well. These are interesting questions for a potential future work.

From the numerics perspective, we have used explicitly the particular form of the memory kernel in our discretization. For general memory terms, where the explitly integration of the kernel (namely, as in~\eqref{int-o0ws} and~\eqref{int-o1ws}) is not straightforward we can still use other first-order approximations in time of the integrand of the memory term, or variants of  zero-order approximations as in~\cite[Sect.~3.2]{MahajanKhan24} and~\cite[Sect.~2.2.2]{MahajanKhanMohan24}.

In either case, the computation of the memory term can be very expensive for finer spatio-temporal meshes, requiring large amounts of computation-time and of computer memory; in particular, note that we need to save all the past history. 
\begin{itemize}
\item In order to reduce the amount of  needed computation-time and computer-memory, so that we can perform the computations in longer time intervals, it would be interesting to investigate whether saving the solution (of the observer) at a smaller (sparse) number of time instants will still provide us with a stabilizing output-based control-input. Here, the approach in~\cite{SloanThomee86} could play a key role; see also the discussions in~\cite[Sect.~4]{LeRouxThomee89} and~\cite[Sect.~5]{BakaevLarssonThomee98}.
\item In order to reduce the  needed computation-time, it would also be interesting to know if computing the solution (of the observer) in a given mesh, will still give us a stabilizing output-based control-input (say, when the controlled state is computed  in refined meshes ``mimicking/approximating'' continuous space-time). Of course, the given coarse mesh should be fine enough so that the corresponding solutions capture 
key properties of the evolution of the closed-loop coupled system (cf. Figs.~\ref{fig:num-free-CL-refs}--\ref{fig:num-66-CL-refs-hatyic0}).
\end{itemize}

\subsection{On nonlinear dynamics}\label{ssec:finrem-Nonlin}
It is well known that the investigation of closed-loop coupled systems is a nontrivial task in the case of nonlinear dynamics, because the so-called separation principle does not hold in general.
However, at a first step we could assume that the control is fixed and focus on the state-estimation problem alone. Or, we could assume that the state is available and focus on the state-stabilization problem alone. We have seen that, in the case of linear  parabolic dynamics, the memory term has a relevant effect in the evolution of the norms of the error~$\widehat y- y$ and of the state~$y$. Then, by considering the state-stabilization problem alone, it could be interesting to investigate this effect in the case of semilinear parabolic dynamics (e.g., involving additional nonlinear terms  given either by polynomial nonlinearities~$\pm y(y-1)$ and~$\pm y(y-1)(y+1)$ as in Fisher and Schl{\"o}gl equations; or monotone polynomial-like nonlinearities as~$\pm y\norm{y}{\bbR}$; or convection/Burgers-like nonlinearities involving the gradient of the state).

\bigskip\noindent
\textbf{Acknowledgments.} S. Mahajan expresses gratitude to the Ministry of Education, Government of India, for financial support through the Prime Minister Research Fellowship (PMRF ID: 2801816), which enabled the conduct of his research. The authors also thank Nagaiah Chamakuri and Asha~K. Dond as the organizers of the LACAM 2024 conference, held at IISER Thiruvananthapuram, Kerala, India, during which the initial discussions regarding this work took place.

\bibliographystyle{abbrvnat} 
\bibliography{CL-memory}

\end{document}